%% file: main.tex
\documentclass[11pt,reqno]{amsart}
\usepackage{rotating} % for the rotated table (only works if loaded before the preprint style)
\usepackage{preprintStyle}
\usepackage{makecell}
\usepackage{cancel}

\usepackage{bm} % bold math symbols

\input{variables}
\input{glossaries}

\input{figures/ph_figure_config.tex}

\pgfplotsset{
    table/search path={data}
}

\usepackage{tcolorbox} % for data/code information box
\newtcolorbox{data_code_information_box}{
    colback = black!1!white,
    colframe = black!10!white,
    left=2pt,
    right=2pt,
    top=2pt,
    bottom=2pt,
}

\usepackage[backend=biber,%
			bibencoding=utf8,%
			style=trad-plain,
			giveninits=true,%
			isbn=false,%
			sortcites=true,%
			natbib=true]{biblatex}

\newbibmacro{string+doiurl}[1]{%
  \iffieldundef{doi}{%
    \iffieldundef{url}{%
          #1%
        }{%
      \href{\thefield{url}}{#1}%
    }%
  }{%
    \href{https://doi.org/\thefield{doi}}{#1}%
  }%
}
\DeclareFieldFormat[book,thesis,incollection,inbook]{title}{\usebibmacro{string+doiurl}{\mkbibemph{#1}}}
\DeclareFieldFormat[article,inproceedings]{title}{\usebibmacro{string+doiurl}{#1}} % \mkbibquote{#1}
   
\DeclareNameFormat{familyonly}{%
  \namepartfamily%
  \usebibmacro{name:andothers}}

\DeclareCiteCommand{\tableauthor}
  {\defcounter{maxnames}{1}}
  {\printnames[familyonly]{author}}
  {\multicitedelim}
  {}
  
\newcommand{\citewithauthor}[1]{\tableauthor{#1}~\cite{#1}}

\title[PG operator inference with application to pH systems]{Petrov--Galerkin operator inference with application to stability-encouraging identification}
\author[J.~Rettberg \and J.~Nicodemus \and H.~Sharma \and B.~Kramer \and J.~Fehr \and  B.~Unger]
{Johannes Rettberg${}^\star$ \and Jonas Nicodemus${}^{\mathsection}$ \and Harsh Sharma${}^\ddagger$ \and Boris Kramer${}^\dagger$ \and Jörg Fehr${}^\star$ \and Benjamin Unger${}^{\mathsection}$}

\address{${}^{\star}$ Institute of Engineering and Computational Mechanics, University of Stuttgart, Pfaffenwaldring 9, 70569 Stuttgart, Germany}
\email{\{johannes.rettberg,joerg.fehr\}@itm.uni-stuttgart.de}

\address{${}^{\mathsection}$ Institute for Applied and Numerical Mathematics, Karlsruhe Institute of Technology, 76131
Karlsruhe, Germany}
\email{\{jonas.nicodemus,benjamin.unger\}@kit.edu}

\address{${}^\dagger$ Department of Mechanical and Aerospace Engineering, University of California San Diego, La Jolla, 92093-0411, USA}
\email{bmkramer@ucsd.edu}

\address{${}^\ddagger$ Department of Mechanical Engineering, University of Wisconsin-Madison, 1513 University Ave
Madison, WI, USA}
\email{hsharma33@wisc.edu}

\date{\today}
\keywords{LTI systems, Port-Hamiltonian systems, structure-preserving model reduction, operator inference, data-driven modeling}

\begin{document}

\begin{abstract}
Data-driven model order reduction methods such as operator inference enable the efficient construction of reduced-order models directly from high-dimensional time-domain data.
Standard operator inference typically seeks a Galerkin-type reduced model in a prescribed low-dimensional subspace by identifying its reduced operators from projected snapshot data.
The resulting inference problem is formulated as a least-squares problem admitting an efficient closed-form solution.
However, it is well known from intrusive model order reduction for linear time-invariant systems that Petrov--Galerkin projections can additionally preserve important system properties such as stability and passivity.
To overcome the limitations of standard operator inference, we extend the framework for linear time-invariant systems to incorporate Petrov--Galerkin projections and provide explicit error expressions and bounds between the intrusive and nonintrusive reduced operators, thus generalizing results from the literature. 
We demonstrate the proposed approach in the context of dissipative and port-Hamiltonian systems. Furthermore, we introduce a novel convex optimization formulation that explicitly enforces the port-Hamiltonian structure on the inferred operators.
The effectiveness of the proposed methods is demonstrated on several well-established benchmark problems, including the CD player, an atmospheric model, a mass-spring-damper system, and a poroelasticity system.
\end{abstract}

\maketitle
% {\footnotesize \textsc{Keywords:} LTI systems, passivity, passivation, KYP lemma, $\calH_2$-optimization, gradient-based optimization}

% {\footnotesize \textsc{AMS subject classification:} 37J06, 37M99, 65P10, 93A30, 93B30, 93C05}
%
%30E05 = Moment problems and interpolation problems in the complex plane
%37J06 = General theory of finite-dimensional Hamiltonian and Lagrangian systems, Hamiltonian and Lagrangian structures, symmetries, invariants
%37M99 = Approximation methods and numerical treatment of dynamical systems
%65P10 = Hamiltonian systems including symplectic integrators (numerical problems in dynamics systems)
%93A15 = Large-scale systems
%93A30 = Mathematical modeling (models of systems, model-matching, etc.)
%93B15 = Realizations from input-output data
%93B30 = System identification
%93B99 = Systems theory
%93C05 = Linear systems (Control systems)
%-----------------------------------------------------------------------------%

\glsresetall % reset abbreviations from abstract
% =============================================================================
\section{Introduction}
\label{sec:intro}
% =============================================================================

\Gls{mor} is an essential tool for the efficient simulation and control of large-scale dynamical systems arising in engineering and scientific computing. Classical projection-based \gls{mor} methods require intrusive access to the governing system operators and construct reduced models by projecting the high-dimensional dynamics onto a low-dimensional trial space. In many practical scenarios, such as black-box simulators or legacy codes with heuristic look-up tables, however, the underlying operators are not directly accessible, and only simulation or measurement data are available. This situation has motivated the development of nonintrusive \gls{mor} techniques, among which \gls{opinf} \cite{PeherstorferWillcox16} has gained considerable attention. \gls{opinf} identifies reduced-order system operators directly from projected time-domain data by solving a linear least-squares problem and has been successfully applied in a variety of applications. We refer to \cite{KramerEtAl24} for a recent survey.

Most \gls{opinf} formulations rely on a Galerkin projection, where the trial and test spaces coincide. In contrast, Petrov--Galerkin projections play a central role in classical projection-based \gls{mor}, for instance, in interpolatory methods and structure-preserving model reduction.
Moreover, relying on Petrov--Galerkin projection may improve the predictive performance of the resulting surrogate model; cf.~\cite{OttPR22,OttPR23}.
Despite their importance in intrusive \gls{mor}, the connection between Petrov--Galerkin projection techniques from classical \gls{mor} and \gls{opinf} remains largely unexplored. 

Recently, \cite{PVB24} proposed a data-driven Petrov--Galerkin model reduction framework in which the projection operators and reduced dynamics are learned simultaneously through a nonlinear optimization problem. In contrast, the present work extends the classical \gls{opinf} methodology to prescribed Petrov--Galerkin trial and test spaces for linear dynamical systems. Rather than identifying the projection itself, we infer the reduced operators associated with a prescribed oblique projection. The resulting problem retains the computational efficiency and linear least-squares structure of standard \gls{opinf} and therefore admits a unique solution under suitable rank conditions. This contrasts with the joint nonlinear optimization of projection operators and reduced dynamics in \cite{PVB24}, where the optimization landscape may contain local minima. At the same time, our formulation preserves a direct connection to intrusive Petrov--Galerkin \gls{mor}, which enables convergence and error analysis with respect to the corresponding intrusive reduced models. Moreover, in structure-preserving settings such as \gls{ph} systems, the trial and test spaces can be chosen based on physical and energetic considerations rather than learned through optimization, thereby naturally incorporating system-theoretic properties into the data-driven reduction process.

% -----------------------------------------------------------------------------------
\subsection{Contributions}
\label{sec:intro:contributions}
% -----------------------------------------------------------------------------------
Our main contributions can be grouped into two categories. First, we derive a theoretical framework for Petrov--Galerkin \gls{opinf}, with the following main results:
\begin{enumerate}[label=(\roman*)]
	\item \emph{Petrov--Galerkin operator inference}: We introduce a Petrov--Galerkin variant of \gls{opinf} in \Cref{sec:pgopinf} and in \Cref{thm:pgopinf}, we present an operator decomposition result that demonstrates that the nonintrusive reduced operators are given by the intrusive reduced operators augmented by a correction term that accounts for the influence of unresolved state components, thereby relating it to the reprojection framework \cite{Peherstorfer20}.
	\item \emph{Error characterization}: We demonstrate that it is not sufficient that  the norm of the unresolved state components converges to zero for the nonintrusive operators to converge to the intrusive operators, as simultaneously, the least-squares problem becomes more ill-conditioned. We then derive an explicit error bound between the intrusive and nonintrusive operators; see \Cref{thm:OpInfNoConv}. 
\end{enumerate}
Second, to demonstrate the potential of Petrov--Galerkin \gls{opinf}, we rely on structure-preserving \gls{mor} methods for \gls{ph} systems, thereby obtaining the following contributions:
\begin{enumerate}[label=(\roman*)]
	\item \emph{Stability-encouraging operator inference}: Leveraging insights from \gls{ph} systems, we employ in \Cref{sec:method:ph_petrov_galerkin_opinf} a Petrov--Galerkin projection that is known from structure-preserving nonintrusive \gls{mor} for linear \gls{ph} systems. This choice of projection promotes stability properties of the inferred reduced model. Since constructing the corresponding test space requires knowledge of the quadratic Hamiltonian, we propose an energy-matrix inference problem that estimates the Hamiltonian matrix from sampled Hamiltonian data.
	\item \emph{Convex \gls{ph} identification}: We further develop in \Cref{sec:method:ph_convex_optimization} a one-step approach for identifying a low-dimensional linear \gls{ph} system based on state, input, and output data by solving a convex optimization problem.
	Unlike existing methods, we do \emph{not} 
	\begin{itemize}
		\item require knowledge of the underlying Hamiltonian,
		\item need to solve a nonlinear optimization problem, or
		\item rely on a process of fixing one matrix and optimizing the other.
	\end{itemize}
\end{enumerate}
We demonstrate our theoretical results and the potential of Petrov--Galerkin \gls{opinf} over standard \gls{opinf} on several well-studied benchmark problems in \Cref{sec:numerics}.
% -----------------------------------------------------------------------------------
\subsection{Literature Review and State of the Art}
\label{sec:intro:literature}
% -----------------------------------------------------------------------------------
% General data-driven modeling/system identification methods

System identification, i.e., the derivation of models from measurements and observations, is a core methodology in the natural sciences.
The desire to do this in an automated fashion has long been present, and early works can be found in \cite{AastroemEykhoff71,NarendraParthasarathy90,Ljung12}. As the current body of literature, including many recent advanced using neural network architectures, is quite significant, we focus our literature review here solely to the core concept required for the understanding of our work, i.e., \gls{opinf} and closely related work, structure-preserving intrusive \gls{mor} for \gls{ph} systems, and their nonintrusive counterparts.

%%%%%%%%%%%%%%%%%%%%%%%%%%%%%%%%%%%%%%%%
% nonintrusive MOR, including DMD and OpInf
\subsubsection{Nonintrusive Model Order Reduction}
% \label{sec:intro:literature:non_intrusive_mor}
%%%%%%%%%%%%%%%%%%%%%%%%%%%%%%%%%%%%%%%%
For high-dimensional systems, surrogate models based on \gls{mor} can be employed to enable efficient simulation and system identification.
A distinction is made between intrusive \gls{mor}, where system operators are accessible, and nonintrusive \gls{mor}, where only data is available.
The latter commonly occurs when data originates from commercial closed-source software with restricted access to the underlying operators.
Probably the best-known approach here is the autoencoder, which maps the data nonlinearly into a latent space of smaller dimensions and then decodes it back into physical space for interpretation.
Various directions have been investigated in \cite{FrescaEtAl21,GruberEtAl22,PichiEtAl24,KneiflEtAl23,ContiEtAl24,ChampionEtAl19}. 
Another well-known method for identifying reduced system operators is \gls{dmd} \cite{Schmid10}.
The goal is to find the spectral decomposition of a linear operator that best continues the system state from one snapshot to the next \cite{BruntonKutz22} with extensions in \cite{ProctorEtAl16,WilliamsEtAl15}.
The \gls{opinf} framework, first proposed in \cite{PeherstorferWillcox16}, extends the \gls{dmd} method by leveraging state-derivative data, accommodating parameter-dependent operators, and enabling the direct identification of output operators.
Unlike the original \gls{dmd} approach, \gls{opinf} can represent models containing nonlinear polynomial terms \cite{KramerEtAl24}, which, in combination with a lifting transformation, facilitates the treatment of complex nonlinear dynamics \cite{QianEtAl20}.

%%%%%%%%%%%%%%%%%%%%%%%%%%%%%%%%%%%%%%%%
% port-Hamiltonian systems
\subsubsection{Structure-preserving Model Order Reduction of Port-Hamiltonian Systems}
%%%%%%%%%%%%%%%%%%%%%%%%%%%%%%%%%%%%%%%%
\Acrlong{ph} systems are attracting increasing attention and a growing number of scientists are conducting research in this field.
Overview papers exist for \glspl{ode} \cite{SchaftJeltsema14}, \glspl{dae} \cite{MehrmannUnger23}, and \glspl{pde} \cite{RashadEtAl20}.
The focus is on the mathematical structure and its utility for areas such as control \cite{GarciaCansecoEtAl05,PhilippEtAl24}, interconnectivity \cite{CerveraEtAl07}, and discretization in space \cite{CardosoRibeiroEtAl20} and in time \cite{KotyczkaLefevre19}.
% Structure-preserving intrusive MOR
Structure-preserving intrusive \gls{mor} for \gls{ph} systems has been adapted for many well-known \gls{mor} methods.
There are structure-preserving variants of tangential interpolation \cite{GugercinEtAl12}, moment-matching \cite{IonescuAstolfi13,PolyugavanderSchaft10}, or balanced truncation \cite{BreitenEtAl22}.
Reduction approaches based on the differential geometric Dirac structure have been developed in \cite{HauschildEtAl19,PolyugavanderSchaft12}.
Optimization-based methods are described in \cite{MoserLohmann20,SchwerdtnerVoigt23}.
In this work however, we focus on nonintrusive methods.

%%%%%%%%%%%%%%%%%%%%%%%%%%%%%%%%%%%%%%%%
% port-Hamiltonian identification
\subsubsection{System Identification of Port-Hamiltonian Systems}
\label{sec:intro:literature:ph_system_identification}
%%%%%%%%%%%%%%%%%%%%%%%%%%%%%%%%%%%%%%%%
The identification and data-based modeling of \gls{ph} systems is both challenging and beneficial. Due to the close relation between the \gls{ph} formulation and physics, \gls{ph} system identification naturally incorporates physical knowledge into the identification process, increasing the likelihood of obtaining robust models that comply with physical laws and generalize well. Existing approaches can be roughly distinguished by the considered domain, required data, and type of \gls{ph} system. An overview of selected \gls{ph} identification methods in comparison with the approaches developed in this work is given in \cref{tab:ph_identification_methods}.

For \textit{linear} \gls{ph} systems, \cite{BennerEtAl20} proposed a Loewner-based method for frequency-domain data that directly yields a transfer function satisfying the \gls{ph} structure, but requires accurate estimation of the spectral zeros. In contrast, \cite{Schwerdtner21} formulates a nonconvex optimization problem based on the difference between transfer functions. In \cite{CherifiEtAl22}, frequency-domain data is first estimated from time-domain measurements using tailored inputs before applying the Loewner framework of \cite{BennerEtAl20}.

A multi-step time-domain approach is proposed in \cite{CherifiEtAl19}. First, conventional \gls{opinf} is used to identify a standard state-space model, which is subsequently transformed into a \gls{ph} representation. Since this transformation requires the Kalman-Yakubovich-Popov inequality to admit a solution, the authors additionally suggest approximating the nearest positive-real system using \cite{GillisSharma18}. This approach, however, is sensitive to the initial guess and convergence is not guaranteed.
The authors of \cite{GuentherEtAl24} fit \gls{ph} matrices from basis elements using gradient-based sensitivity optimization, while \cite{GuentherEtAl24a} employs optimal-control techniques to solve a constrained nonconvex calibration problem. In \cite{GoyPB25,GoyalEtAl23a}, linear \gls{ph} systems without feedthrough are identified by avoiding explicit structural constraints through a reparametrization first introduced in \cite{Mlinaric20}. This yields an unconstrained problem at the cost of increased nonconvexity. In a similar spirit, \cite{EPB26} identifies stable systems using the same reparametrization while additionally optimizing the oblique projection matrices.
More recently, the \gls{ph} \gls{dmd} framework was introduced in \cite{MorandinEtAl23}. It modifies the \gls{dmd} problem to explicitly incorporate system structure and iteratively optimizes the structure and dissipation matrices, but requires access to the Hessian of the Hamiltonian.

The identification of \textit{nonlinear} \gls{ph} systems from time-domain data has been addressed in fewer works and typically under restrictive assumptions. For instance, \cite{GJKW25} assumes the nonlinear Hamiltonian to be known, while \cite{DesaiEtAl21} uses a neural network to identify a \gls{ph} structure under assumptions on the form of the dissipation and forcing terms. In \cite{RettbergEtAl25}, an autoencoder nonlinearly maps the states to a latent space in which a linear \gls{ph} system is subsequently identified.

\begin{table}[htb]
    %\centering
    \caption{A selection of related \gls{ph} identification methods in the literature in comparison with the approaches described in this work.
	}
    \footnotesize
    \label{tab:ph_identification_methods}
    \begin{tabular}{lllllp{5cm}}
        \toprule
        reference & domain & data & system class & key ingredient \\ \midrule
        \citewithauthor{BennerEtAl20} & frequency & $\transferFunction(\laplaceVariable)$ samples%, spectral zeros 
        & linear & pH Loewner\\ 
        \citewithauthor{Schwerdtner21} & frequency & $\transferFunction(\laplaceVariable)$ samples & linear, DAE & nonlinear optimization \\
        \citewithauthor{CherifiEtAl19} & time  & $\phOutputs$, $\phInputs$ & linear, DAE & mod.~Loewner + nearest PR \cite{GillisSharma18} \\ 
        \citewithauthor{CherifiEtAl22} & time & $\phOutputs$, $\phInputs$ & linear & mod.~pH Loewner \\ 
        \citewithauthor{GoyPB25} & time  & $\states$, $\phOutputs$, $\phInputs$ & linear & nonlinear optimization\\ 
        \citewithauthor{MorandinEtAl23} & time & $\states$, $\phOutputs$, $\phInputs$, $\phQ$ & linear & projected gradient optimization \\
        \citewithauthor{GuentherEtAl24} & time & $\phOutputs$, $\phInputs$ & linear & sensitivity analysis \\
        \citewithauthor{GuentherEtAl24a} & time & $\phOutputs$, $\phInputs$ & linear & constrained optimization % nonconvex 
        \\
		\citewithauthor{RettbergEtAl25} & time & $\states$, $\phOutputs$, $\phInputs$ & (non)linear & autoencoder + NN \\
		\citewithauthor{DesaiEtAl21} & time & $\states$ % - gen.~position and momentum 
        & nonlinear & dissipation and forcing structure \\
		\citewithauthor{GJKW25} & time & $\states$, $\phOutputs$, $\phInputs$, $\nabla\hamiltonian$ & nonlinear & operator inference\\ 
		\midrule
		\pgopinf
		 & time & $\states$, $\phOutputs$, $\phInputs$, $\phQ$ & linear &  see \cref{sec:method:ph_petrov_galerkin_opinf}\\
		\pgopinfham
		& time & $\states$, $\phOutputs$, $\phInputs$, $\hamiltonian$ & linear &  
		see \cref{sec:method:hamiltonian_based_phQ_identification}
		\\
		\cvxph
		& time & $\states$, $\phOutputs$, $\phInputs$ & linear & see \cref{sec:method:ph_convex_optimization} \\
        \bottomrule        
    \end{tabular}
\end{table}

% -----------------------------------------------------------------------------------
\subsection{Organization of the Manuscript}
% -----------------------------------------------------------------------------------
Our paper is organized as follows. In \cref{sec:prelims}, we present the fundamental concepts required for this work. In particular, we review the basics of the \gls{ph} framework, including its intrinsic stability and passivity properties, and discuss both intrusive and non-intrusive \gls{mor} approaches.
Our main contributions are developed in \cref{sec:methodology}. We begin by introducing the Petrov--Galerkin projected \gls{opinf} method in \cref{sec:pgopinf}. This projection is then applied in the context of \gls{ph} systems in \cref{sec:method:linear_ph_identification}, where we present different approaches for deriving the required Hessian of the Hamiltonian. In addition, we introduce a convex optimization formulation that does not impose additional requirements.
In \cref{sec:numerics}, we investigate the performance and mathematical validity of the proposed method through numerical examples.
Finally, \cref{sec:conclusions} summarizes the main contributions and findings of this work.

% -----------------------------------------------------------------------------------
\subsection{Notation}
% -----------------------------------------------------------------------------------
We use the symbols $\N$, $\R$, $\C$, $\Cplus$, $\R^n$, $\R^{n\times m}$, and $\eye{n}$ to denote, respectively, the positive integers, the real numbers, the complex numbers, the complex numbers with positive real part, the set of column vectors with $n\in\N$ real entries, the set of real $n\times m$ matrices, and the $n$-dimensional identity matrix.
Furthermore, $\stateSpaceSystemMatrix \succ 0$ and $\stateSpaceSystemMatrix \succeq 0$ indicate that the matrix $\stateSpaceSystemMatrix\in\R^{n\times n}$ is, respectively, symmetric positive definite and symmetric positive semidefinite.
We denote with $\Spd{n}$, and $\Spsd{n}$ the sets of real $n \times n$ symmetric positive definite and symmetric positive semidefinite  matrices, respectively.
We use the symbols $\stateSpaceSystemMatrix\tran$, and $\frobNorm{\stateSpaceSystemMatrix}$ for the transpose and the Frobenius norm of a matrix, respectively.

% =============================================================================
\section{Preliminaries}
\label{sec:prelims}
% =============================================================================

In this work, we consider dynamical systems that can be modeled via \gls{lti} systems of the form
\begin{equation}
	\label{eq:full_order_state_space_system}
	\begin{aligned}
		\Dstates(\timee) 
			&= \stateSpaceSystemMatrix \states(\timee) + \stateSpaceInputMatrix\stateSpaceInputs(\timee), 
			\quad \states(0) = \initialConditionStates, \\
		\stateSpaceOutputs(\timee) 
			&= \stateSpaceOutputMatrix\states(\timee) + \stateSpaceFeedthroughMatrix\stateSpaceInputs(\timee),
	\end{aligned}
\end{equation} 
with a state $\states\colon\timeInterval\to\R^{\dimStates}$, the initial condition $\initialConditionStates\in\R^{\dimStates}$, the input $\stateSpaceInputs\colon\timeInterval\to\R^{\dimInputs}$, and the outputs $\phOutputs\colon\timeInterval\to\R^{\dimOutputs}$ for a time interval $\timeInterval\vcentcolon=\brackets{0,\timeeEnd}$ with $\timeeEnd>0$ and $\timee\in\timeInterval$.
While the state dimension $\dimStates \in \N$ can be large, the numbers of inputs $\dimInputs \in \N$ and outputs $\dimOutputs \in \N$ are typically, comparatively small.
Here, $\stateSpaceSystemMatrix\in\R^{\dimStates\times\dimStates}$, $\stateSpaceInputMatrix\in\R^{\dimStates\times\dimInputs}$, $\stateSpaceOutputMatrix\in\R^{\dimOutputs\times\dimStates}$, $\stateSpaceFeedthroughMatrix\in\R^{\dimOutputs\times\dimInputs}$ denote the system, input, output, and feedthrough matrix, respectively.
We refer to the \gls{lti} system~\cref{eq:full_order_state_space_system} as the \gls{fom} or high-fidelity model.
It can be represented in the frequency domain via its transfer function $\transferFunction:\C\setminus\spectrum(\stateSpaceSystemMatrix)\to\C^{\dimOutputs\times\dimInputs}$ as
\begin{align}
	\label{eq:lti_transfer_function}
	\transferFunction(\laplaceVariable) = \stateSpaceOutputMatrix(\laplaceVariable \eye{\dimStates} - \stateSpaceSystemMatrix)\inv\stateSpaceInputMatrix + \stateSpaceFeedthroughMatrix,
\end{align}
where $\laplaceVariable\in\C$ is the complex Laplace variable and $\spectrum(\stateSpaceSystemMatrix)$ denotes the spectrum of the system matrix $\stateSpaceSystemMatrix$.

% -----------------------------------------------------------------------------------
\subsection{Dissipative-Hamiltonian and Port-Hamiltonian Systems}
% -----------------------------------------------------------------------------------
The system \cref{eq:full_order_state_space_system} is known to be \emph{stable} if all eigenvalues $\eigenvalue\in\spectrum(\stateSpaceSystemMatrix)$ lie in the left half of the complex plane, i.e., $\real{\eigenvalue}\leq0$ and all eigenvalues on the imaginary axis have equal algebraic and geometric multiplicities. 
Furthermore, the system is \emph{asymptotically stable} if all eigenvalues lie strictly in the left half of the complex plane, i.e., $\real{\eigenvalue}<0$; see for instance \cite{HinrichsenPritchard05}.

\begin{definition}[Dissipative Hamiltonian decomposition]
	Let $\stateSpaceSystemMatrix\in\R^{\dimStates\times\dimStates}$ be the system matrix of an asymptotically stable \gls{lti} system~\cref{eq:full_order_state_space_system}.
	The decomposition of $\stateSpaceSystemMatrix$ into the form
	\begin{equation*}
	\stateSpaceSystemMatrix = (\phJ - \phR)\phQ,
	\end{equation*}
	with a skew-symmetric matrix $\phJ\in\R^{\dimStates\times\dimStates}$, a symmetric positive semidefinite matrix $\phR\in\Spsd{\dimStates}$, and a symmetric positive definite matrix $\phQ\in\Spd{\dimStates}$, is referred to as \emph{dissipative Hamiltonian} decomposition.
\end{definition}
In \cite{GillisSharma17}, it is shown that the system~\cref{eq:full_order_state_space_system} is asymptotically stable if and only if the system matrix admits a dissipative Hamiltonian decomposition.

A system \cref{eq:full_order_state_space_system} is called \emph{passive} if there exists a continuously differentiable storage function $\storageFunction:\R^{\dimStates}\to\R$, the so-called \emph{Hamiltonian}, such that for all $\timee\in\timeInterval$ the dissipation inequality
\begin{align}
	\label{eq:dissipation_inequality}
	\ddt \storageFunction(\states(\timee)) \leq \stateSpaceInputs(\timee)\tran \stateSpaceOutputs(\timee)
\end{align}
for $\dimOutputs=\dimInputs$ holds \cite{Willems72,Willems72a}.

\begin{definition}[\acrlong{ph} system]
An \gls{lti} system of the form
\begin{equation}
\label{eq:linear_ph_system_with_feedthrough}
\begin{aligned}
\Dstates(\timee) &= (\phJ-\phR)\phQ\states(\timee) + (\phInputMatrix - \phFeedthroughInputMatrix) \phInputs(\timee), \quad \states(0) = \initialConditionStates, \\
\phOutputs(\timee) &= (\phInputMatrix + \phFeedthroughInputMatrix)\tran\phQ\states(\timee) + (\phFeedthroughOutputMatrixSym + \phFeedthroughOutputMatrixSkew)\phInputs(\timee),
\end{aligned}
\end{equation}
together with the Hamiltonian
\begin{align}
	\label{eq:hamiltonian_quadratic_form}
	\hamiltonian\colon \R^{\dimStates}\to\R, \quad \states \mapsto \tfrac{1}{2}\states\tran\phQ\states,
\end{align} 
with the matrices $\phJ,\phR,\phQ\in\R^{\dimStates\times\dimStates}$, $\phInputMatrix, \phFeedthroughInputMatrix\in\R^{\dimStates\times\dimInputs}$, $\phFeedthroughOutputMatrixSym,\phFeedthroughOutputMatrixSkew\in\R^{\dimInputs\times\dimInputs}$, is called a \gls{ph} system if the following conditions hold:
\begin{enumerate}[label=(\roman*)]
	\item the Hessian of the Hamiltonian $\phQ$ is symmetric positive definite, 
	\item the structure matrix $ \phStructureMatrix \vcentcolon= \begin{bmatrix}
\phJ & \phInputMatrix \\
-\phInputMatrix\tran & \phFeedthroughOutputMatrixSkew
\end{bmatrix}$ is skew-symmetric, and
	\item the dissipation matrix $\phGeneralDissipationMatrix \vcentcolon= \begin{bmatrix}
\phR & \phFeedthroughInputMatrix \\
\phFeedthroughInputMatrix\tran & \phFeedthroughOutputMatrixSym
\end{bmatrix}$ is symmetric positive semidefinite.
\end{enumerate}
\end{definition}

Using the block matrices $\phStructureMatrix$  and $\phGeneralDissipationMatrix$, we can equivalently write \cref{eq:linear_ph_system_with_feedthrough} as
\begin{align}
    \label{eq:linear_block_ph_system_with_feedthrough}
        \begin{bmatrix}
        \Dstates(\timee) \\
        -\phOutputs(\timee)
    \end{bmatrix} = (\phStructureMatrix-\phGeneralDissipationMatrix)\begin{bmatrix}
        \phQ\states(\timee) \\
        \phInputs(\timee)
    \end{bmatrix}.
\end{align}
From these structural constraints one can show that \gls{ph} systems are passive and stable; cf.~\cite{SchaftJeltsema14}.

\subsection{Intrusive Model Order Reduction}
In the case that the system operators of \cref{eq:full_order_state_space_system} are known, the common system-theoretic approach of surrogate modeling is projection-based \gls{mor}.
Here, the dynamics are approximated in an $\dimReducedStates$-dimensional subspace $\reducedSpace\subseteq\R^{\dimStates}$ defined by a projection basis $\reductionBasis\in\R^{\dimStates\times\dimReducedStates}$ such that $\Span{\reductionBasis}=\reducedSpace$, where the reduced dimension $\dimReducedStates\in\N$ is usually much lower than the original dimension, i.e., $\dimReducedStates\ll\dimStates$.
By inserting the approximation ansatz 
\begin{align}
	\states(\timee)\approx \reductionBasis\reducedStates(\timee),
\end{align}
with the reduced state $\reducedStates\colon\timeInterval\to\R^{\dimReducedStates}$ into \cref{eq:full_order_state_space_system}, and enforcing the residual to be orthogonal to the test space $\reducedSpaceTest\subseteq\R^{\dimStates}$ defined by a second projection matrix $\reductionBasisLeft\in\R^{\dimStates\times\dimReducedStates}$ such that $\Spann\big(\reductionBasisLeft\big)=\reducedSpaceTest$ and $\reductionBasisLeft\tran \reductionBasis$ has full column rank (i.e., is nonsingular), we obtain the \gls{rom} of order $\dimReducedStates$ as
\begin{subequations}\label{eq:reduced_order_state_space_system}
	\begin{equation}\label{eq:reduced_order_state_space_system:sys}
	\begin{aligned}
	 \stateSpaceDescriptorMatrixReduced \DreducedStates(\timee)
        &= \stateSpaceSystemMatrixReduced\reducedStates(\timee)
        + \stateSpaceInputMatrixReduced\stateSpaceInputs(\timee),
		\quad \reducedStates(0) = \parentheses{\reductionBasisLeft\tran\reductionBasis}\inv\reductionBasisLeft\tran \initialConditionStates, \\
        \stateSpaceOutputsReduced(\timee)
        &= \stateSpaceOutputMatrixReduced \reducedStates(\timee)
        + \stateSpaceFeedthroughMatrixReduced\stateSpaceInputs(\timee),
	\end{aligned}
	\end{equation}
	where the reduced matrices are defined by
	\begin{equation}\label{eq:reduced_order_state_space_system:mats}
	\begin{aligned}
    	\stateSpaceDescriptorMatrixReduced &\vcentcolon=\reductionBasisLeft\tran \reductionBasis, &
    	\stateSpaceSystemMatrixReduced &\vcentcolon=\reductionBasisLeft\tran\stateSpaceSystemMatrix\reductionBasis, &
    	\stateSpaceInputMatrixReduced &\vcentcolon=\reductionBasisLeft\tran\stateSpaceInputMatrix, &
    	\stateSpaceOutputMatrixReduced &\vcentcolon=\stateSpaceOutputMatrix\reductionBasis, &
    	\stateSpaceFeedthroughMatrixReduced &\vcentcolon=\stateSpaceFeedthroughMatrix.
    \end{aligned}
	\end{equation}
\end{subequations}
The system~\cref{eq:reduced_order_state_space_system:sys} is still an \gls{lti} system with the descriptor matrix $\stateSpaceDescriptorMatrixReduced$. It can be transformed into the form of~\cref{eq:full_order_state_space_system} by left-multiplying the first equation of \cref{eq:reduced_order_state_space_system:sys} with $\stateSpaceDescriptorMatrixReduced^{-1}$.

This oblique projection with the underlying projector
\begin{align}
	\label{eq:projector_petrov_galerkin}
	\projector = \reductionBasis\parentheses{\reductionBasisLeft\tran\reductionBasis}\inv\reductionBasisLeft\tran\in\R^{\dimStates\times\dimStates}
\end{align}
is known as \emph{Petrov--Galerkin projection}, while an orthogonal projection with $\reductionBasisLeft:=\reductionBasis$ is called a \emph{Galerkin} projection.
An arbitrary (Petrov--)Galerkin projection does not, in general, preserve the stability or passivity of a system.

\begin{proposition}[\cite{WLEK10}]
	Let $\reductionBasisLeft$ be chosen as $\reductionBasisLeft = \phQ\reductionBasis$. Then, the \gls{rom} is given by 
	\begin{subequations}\label{eq:pg:intrusive:rom:ph}
	\begin{equation}\label{eq:pg:intrusive:rom:ph:sys}
	\begin{aligned}
	\phEReduced\DreducedStates(\timee) 
		&= (\phJReduced-\phRReduced)\reducedStates(\timee) + (\phInputMatrixReduced - \phFeedthroughInputMatrixReduced) \phInputs(\timee), 
		\quad \reducedStates(0) = \parentheses{\reductionBasisLeft\tran\reductionBasis}\inv\reductionBasisLeft\tran \initialConditionStates, \\
	\reducedphOutputs(\timee) 
		&= (\phInputMatrixReduced + \phFeedthroughInputMatrixReduced)\tran\reducedStates(\timee) + (\phFeedthroughOutputMatrixSymReduced + \phFeedthroughOutputMatrixSkewReduced)\phInputs(\timee),
	\end{aligned}
	\end{equation}
	where the reduced  matrices are defined by
	\begin{equation}\label{eq:pg:intrusive:rom:ph:mats}
		\begin{aligned}
		\phEReduced &\vcentcolon= \reductionBasis\tran\phQ\tran\reductionBasis,
		&\phQReduced &\vcentcolon= \eye{\dimReducedStates},
		&\phJReduced &\vcentcolon= \reductionBasis\tran\phQ\tran\phJ\phQ\reductionBasis, 
		&\phRReduced &\vcentcolon= \reductionBasis\tran\phQ\tran\phR\phQ\reductionBasis, \\
		\phInputMatrixReduced &\vcentcolon= \reductionBasis\tran\phQ\tran\phInputMatrix,
		&\phFeedthroughInputMatrixReduced &\vcentcolon= \reductionBasis\tran\phQ\tran\phFeedthroughInputMatrix,
		&\phFeedthroughOutputMatrixSymReduced &\vcentcolon= \phFeedthroughOutputMatrixSym, 
		&\phFeedthroughOutputMatrixSkewReduced &\vcentcolon= \phFeedthroughOutputMatrixSkew.
	\end{aligned} 
	\end{equation}
	\end{subequations}
	Hence, $\phEReduced$ is symmetric positive definite, $\phJReduced$ is skew-symmetric, and $\phRReduced$ is symmetric positive semidefinite.
\end{proposition}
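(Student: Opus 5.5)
The proof is a direct substitution of $\reductionBasisLeft = \phQ\reductionBasis$ into the Petrov--Galerkin reduced matrices \cref{eq:reduced_order_state_space_system:mats}, applied to the \gls{ph} realization \cref{eq:linear_ph_system_with_feedthrough}. In that realization the system, input, output and feedthrough matrices are $(\phJ-\phR)\phQ$, $\phInputMatrix-\phFeedthroughInputMatrix$, $(\phInputMatrix+\phFeedthroughInputMatrix)\tran\phQ$ and $\phFeedthroughOutputMatrixSym+\phFeedthroughOutputMatrixSkew$.

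\textbf{Step 1: identify the reduced matrices.}
\begin{itemize}
\item Descriptor matrix: $\reductionBasisLeft\tran\reductionBasis = \reductionBasis\tran\phQ\tran\reductionBasis = \phEReduced$.
\item System matrix: $\reductionBasisLeft\tran(\phJ-\phR)\phQ\reductionBasis = \reductionBasis\tran\phQ\tran\phJ\phQ\reductionBasis - \reductionBasis\tran\phQ\tran\phR\phQ\reductionBasis = \phJReduced-\phRReduced$.
\item Input matrix: $\reductionBasisLeft\tran(\phInputMatrix-\phFeedthroughInputMatrix) = \phInputMatrixReduced-\phFeedthroughInputMatrixReduced$.
\item Output matrix: $(\phInputMatrix+\phFeedthroughInputMatrix)\tran\phQ\reductionBasis$. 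Since $\phQ=\phQ\tran$, this equals $\bigl(\reductionBasis\tran\phQ\tran(\phInputMatrix+\phFeedthroughInputMatrix)\bigr)\tran = (\phInputMatrixReduced+\phFeedthroughInputMatrixReduced)\tran$.
\item Feedthrough: it is unchanged, which gives $\phFeedthroughOutputMatrixSymReduced$ and $\phFeedthroughOutputMatrixSkewReduced$.
\item Initial condition: it comes directly from \cref{eq:reduced_order_state_space_system:sys}.
\end{itemize}
The reduced Hessian is $\phQReduced=\eye{\dimReducedStates}$ because the factor $\phQ$ has been absorbed into $\phEReduced$ and the other reduced operators. Consequently the reduced Hamiltonian is $\tfrac12\reducedStates\tran\phEReduced\reducedStates$, which equals $\hamiltonian(\reductionBasis\reducedStates)$.

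\textbf{Step 2: structural properties.}
\begin{itemize}
\item $\phJReduced$ is skew-symmetric: with $\phQ$ symmetric, $\phJReduced\tran = \reductionBasis\tran\phQ\tran\phJ\tran\phQ\reductionBasis = -\phJReduced$.
\item $\phRReduced$ is symmetric positive semidefinite: it is a congruence of $\phR$ by $\phQ\reductionBasis$, so symmetry follows immediately, and $\reducedStates\tran\phRReduced\reducedStates = (\phQ\reductionBasis\reducedStates)\tran\phR(\phQ\reductionBasis\reducedStates)\ge 0$.
\item The reduced block matrices are congruences of the full ones by $\mathrm{diag}(\phQ\reductionBasis,\eye{\dimInputs})$. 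Hence the reduced structure matrix stays skew-symmetric and the reduced dissipation matrix stays positive semidefinite, by the same argument.
\end{itemize}

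\textbf{Step 3: positive definiteness of $\phEReduced$.}
This is the only point needing a hypothesis. We have $\reducedStates\tran\phEReduced\reducedStates = (\reductionBasis\reducedStates)\tran\phQ(\reductionBasis\reducedStates)$, which is positive for $\reducedStates\neq0$ provided $\reductionBasis$ has full column rank. That is implicit in the standing assumption that $\reductionBasisLeft\tran\reductionBasis$ is nonsingular. The argument also uses the symmetry of $\phQ$, which holds because $\phQ\in\Spd{\dimStates}$.

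The main obstacle is only bookkeeping: making sure the transposes in the output matrix match, which requires $\phQ\tran=\phQ$.
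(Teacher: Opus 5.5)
Your proof is correct and is the standard direct substitution of $\reductionBasisLeft=\phQ\reductionBasis$ into the Petrov--Galerkin reduced matrices; the paper gives no proof of its own beyond citing the result, and you correctly note that full column rank of $\reductionBasis$ follows from the nonsingularity of $\reductionBasisLeft\tran\reductionBasis$. One small correction to your closing remark: the output-matrix identity $(\phInputMatrix+\phFeedthroughInputMatrix)\tran\phQ\reductionBasis=\bigl(\reductionBasis\tran\phQ\tran(\phInputMatrix+\phFeedthroughInputMatrix)\bigr)\tran$ and the skew-symmetry of $\phJReduced$ hold for arbitrary $\phQ$, so symmetry of $\phQ$ is needed only to obtain $\phEReduced\tran=\phEReduced$.
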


It is well-known, see for instance~\cite{BMXZ18,MehrmannUnger23}, that the \gls{rom}~\cref{eq:pg:intrusive:rom:ph} is a \gls{ph} system with a descriptor matrix $\phEReduced$. However, it can be transformed into the form of $\cref{eq:linear_ph_system_with_feedthrough}$ via the state transformation with $\reducedStates\mapsto\phEReduced\inv\reducedStates$, which yields $\phQReduced=\phEReduced^{-1}$; cf.~\cite[Rem.~20]{BMXZ18}.

\subsection{Operator Inference}
In contrast to intrusive \gls{mor} approaches, the nonintrusive setting assumes that the system operators in \cref{eq:full_order_state_space_system} are not accessible and that only full state data are available.
Within this context, the \gls{opinf} framework seeks to infer low-dimensional operators that best fit data generated by a dynamical system \cite{KramerEtAl24}.
The \gls{opinf} workflow is typically subdivided into three distinct phases.

\textbf{Phase 1:} During the data generation, $\numDataPoints\in\N$ snapshots of the states, state derivatives, inputs and outputs are calculated via a time discretization of \cref{eq:full_order_state_space_system} and stacked into matrices 
\begin{equation}
	\label{eq:data_matrices}
	\begin{aligned}		
	\DstatesData &= \begin{bmatrix}
		\Dstates_{1} & \dots & \Dstates_{\numDataPoints}
	\end{bmatrix} \in\R^{\dimStates\times\numDataPoints},\quad
	&&\statesData&=\begin{bmatrix}
		\states_{1} & \dots & \states_{\numDataPoints}
	\end{bmatrix} \in\R^{\dimStates\times\numDataPoints},\\
	\inputData &= \begin{bmatrix}
		\stateSpaceInputs_{1} & \dots & \stateSpaceInputs_{\numDataPoints}
	\end{bmatrix} \in\R^{\dimInputs\times\numDataPoints}, \quad
	&&\outputData &= \begin{bmatrix}
		\stateSpaceOutputs_{1} & \dots & \stateSpaceOutputs_{\numDataPoints}
	\end{bmatrix} \in\R^{\dimOutputs\times\numDataPoints},
	\end{aligned}
\end{equation}
where the discretized data, i.e., $\states_{\iteratorTimeDiscretization}\approx\states(\timee_{\iteratorTimeDiscretization})$ for $\iteratorTimeDiscretization=1,\dots,\numDataPoints$ with $\timee_{\iteratorTimeDiscretization}\in\timeInterval$, depends on the underlying discretization scheme. The quantities $\Dstates_{\iteratorTimeDiscretization}$, $\stateSpaceInputs_{\iteratorTimeDiscretization}$, and  $\stateSpaceOutputs_{\iteratorTimeDiscretization}$ are defined analogously.
If no data for the time derivatives $\Dstates_{\iteratorTimeDiscretization}$ is available, then a classical finite difference method may be used to approximate $\Dstates_{\iteratorTimeDiscretization}$.

\textbf{Phase 2}: A basis $\reductionBasis\in\R^{\dimStates\times\dimReducedStates}$ for the subspace $\reducedSpace$ is constructed.
Typically, the \gls{pod} method is used on the state data $\statesData$, which finds an orthonormal basis, i.e., $\reductionBasis\tran\reductionBasis=\eye{\dimReducedStates}$, that minimizes the Euclidean distance between the data points and their projection \cite{Volkwein13}.
With this basis, the data is projected as
\begin{align}
	\label{eq:projected_data_matrices_galerkin}
	\reducedStatesData = \reductionBasis\tran\statesData, \qquad\text{and}\qquad
	\reducedDstatesData = \reductionBasis\tran\DstatesData.
\end{align}

\textbf{Phase 3:} In the identification phase, the reduced operators are inferred by solving the minimization problem
\begin{equation}
	\label{eq:opinf_optimization_problem}
	\min_{\stateSpaceSystemMatrixReducedIdentified, \stateSpaceInputMatrixReducedIdentified, \stateSpaceOutputMatrixReducedIdentified, \stateSpaceFeedthroughMatrixReducedIdentified} \quad
	\Bigg\lVert % can't use autosize because of underbrace
		\underbrace{
	\begin{bmatrix}
		\reducedDstatesData \\ \outputData
	\end{bmatrix}}_{=\vcentcolon\lhsDstatesOutputDataReduced}
	-
	\underbrace{
	\begin{bmatrix}
		\stateSpaceSystemMatrixReducedIdentified & \stateSpaceInputMatrixReducedIdentified \\
		\stateSpaceOutputMatrixReducedIdentified & \stateSpaceFeedthroughMatrixReducedIdentified
	\end{bmatrix}}_{=\vcentcolon\blockMatrixStateSpaceABCDReducedIdentified}
	\underbrace{
	\begin{bmatrix}
		\reducedStatesData \\ \inputData
	\end{bmatrix}}_{=\vcentcolon\rhsStatesInputDataReduced}
	\Bigg\rVert_{\indexFrobNorm}^{2},
\end{equation}
or a suitably regularized version thereof, where $\stateSpaceSystemMatrixReducedIdentified\in\R^{\dimReducedStates\times\dimReducedStates}$, $\stateSpaceInputMatrixReducedIdentified\in\R^{\dimReducedStates\times\dimInputs}$, $\stateSpaceOutputMatrixReducedIdentified\in\R^{\dimOutputs\times\dimReducedStates}$, $\stateSpaceFeedthroughMatrixReducedIdentified\in\R^{\dimOutputs\times\dimInputs}$ are the identified counterparts of the reduced system operators from \cref{eq:reduced_order_state_space_system}.
By defining the block matrices $\blockMatrixStateSpaceABCDReducedIdentified\in\R^{(\dimReducedStates+\dimInputs) \times (\dimReducedStates+\dimInputs)}$, $\lhsDstatesOutputDataReduced\in\R^{(\dimReducedStates+\dimInputs) \times \numDataPoints}$, and $\rhsStatesInputDataReduced\in\R^{(\dimReducedStates+\dimInputs) \times \numDataPoints}$, the minimum-norm solution of~\cref{eq:opinf_optimization_problem}, is given by
\begin{align}
	\begin{bmatrix}
		\stateSpaceSystemMatrixReducedIdentifiedMinimizer & \stateSpaceInputMatrixReducedIdentifiedMinimizer \\
		\stateSpaceOutputMatrixReducedIdentifiedMinimizer & \stateSpaceFeedthroughMatrixReducedIdentifiedMinimizer
	\end{bmatrix}=\blockMatrixStateSpaceABCDReducedIdentifiedMinimizer = \lhsDstatesOutputDataReduced\rhsStatesInputDataReduced\pinv,
\end{align}
where $\rhsStatesInputDataReduced\pinv$ denotes the Moore-Penrose pseudoinverse of $\rhsStatesInputDataReduced$.
We emphasize that this solution is unique if and only if the matrix $\rhsStatesInputDataReduced$ has full row rank.

% =============================================================================
\section{Methodology}
\label{sec:methodology}
The proposed Petrov--Galerkin \gls{opinf} methodology leverages the structure-preserving projection~\cref{eq:pg:intrusive:rom:ph} from the intrusive \gls{mor} of \gls{ph} systems in the nonintrusive \gls{opinf} framework.
We begin by presenting the extension of \gls{opinf} to Petrov--Galerkin projected \glspl{rom}.
This extension is briefly mentioned in \cite{Peherstorfer20}, but not further analyzed.
% =============================================================================
\subsection{Linear Petrov--Galerkin Operator Inference}
\label{sec:pgopinf}
We consider snapshot data \cref{eq:data_matrices} from the time discretization of the state-space system \cref{eq:full_order_state_space_system}. Consequently, the data matrices \cref{eq:data_matrices} are assumed to approximately satisfy the system equations \cref{eq:full_order_state_space_system}, i.e., 
\begin{equation}\label{eq:data_satisfies_system_equations}
	\begin{aligned}
	\DstatesData &= \stateSpaceSystemMatrix \statesData + \stateSpaceInputMatrix\inputData + \residualTermStateTimeInt, \\
	\outputData &= \stateSpaceOutputMatrix \statesData + \stateSpaceFeedthroughMatrix \inputData + \residualTermOutputTimeInt.	
	\end{aligned}
\end{equation}
with discretization or measurement error contributions $\residualTermStateTimeInt\in\R^{\dimStates\times\numDataPoints}$ and $\residualTermOutputTimeInt\in\R^{\dimOutputs\times\numDataPoints}$.
Then, in contrast to the usual \gls{opinf} framework, and in agreement with the general geometric framework for \gls{mor} in \cite{BuchfinkEtAl24}, we project the state data according to the oblique projection \cref{eq:projector_petrov_galerkin} as
\begin{align}
	\begin{aligned}
		\label{eq:projected_data_matrices_petrov_galerkin}
		\reducedStatesData &\vcentcolon= (\reductionBasisLeft\tran\reductionBasis)\inv\reductionBasisLeft\tran\statesData, \\
		\reducedDstatesData &\vcentcolon= (\reductionBasisLeft\tran\reductionBasis)\inv\reductionBasisLeft\tran\DstatesData,
	\end{aligned}
\end{align}
where we assume for now that the test basis $\reductionBasisLeft$ is given.
By setting $\stateSpaceDescriptorMatrixReduced\vcentcolon= \reductionBasisLeft\tran\reductionBasis$ and introducing the notation
\begin{gather*}
	\lhsDstatesOutputDataReduced \vcentcolon= \begin{bmatrix}
		\stateSpaceDescriptorMatrixReduced\reducedDstatesData \\
		\outputData
	\end{bmatrix} \in\R^{(\dimReducedStates+\dimInputs)\times\numDataPoints},\qquad
	\rhsStatesInputDataReduced \vcentcolon= \begin{bmatrix}
		\reducedStatesData \\
		\inputData
	\end{bmatrix} \in\R^{(\dimReducedStates+\dimInputs)\times\numDataPoints},\\
	\blockMatrixStateSpaceABCDReducedIdentified \vcentcolon= \begin{bmatrix}
		\stateSpaceSystemMatrixReducedIdentified & \stateSpaceInputMatrixReducedIdentified \\
		\stateSpaceOutputMatrixReducedIdentified & \stateSpaceFeedthroughMatrixReducedIdentified
	\end{bmatrix} \in\R^{(\dimReducedStates+\dimInputs) \times (\dimReducedStates+\dimInputs)},
\end{gather*}
we obtain the \textit{Petrov--Galerkin \gls{opinf} problem}
\begin{align}
	\label{eq:pgopinf}
	\min_{\blockMatrixStateSpaceABCDReducedIdentified} \|\lhsDstatesOutputDataReduced - \blockMatrixStateSpaceABCDReducedIdentified\rhsStatesInputDataReduced\|_{\indexFrobNorm},
\end{align}
which again can be solved optimally via the pseudoinverse as in \cref{eq:opinf_optimization_problem}.

\subsubsection{Error Analysis of Petrov--Galerkin Operator Inference}

To understand the relation between the intrusive operators defined in~\eqref{eq:reduced_order_state_space_system} and the nonintrusive operators resulting from the Petrov--Galerkin \gls{opinf} problem~\eqref{eq:pgopinf}, we split the state data into its projection onto the reduced space $\reducedSpace$ and the corresponding complement $\orthogonalComplementStatesData\vcentcolon= (\eye{\dimStates}-\projector)\statesData\in\R^{\dimStates\times\numDataPoints}$ of the data with respect to the oblique projection $\projector$ as
\begin{align}
	\label{eq:data_decomposition}
	\statesData &= \reductionBasis\reducedStatesData + \orthogonalComplementStatesData.
\end{align}

\begin{theorem}[\Gls{opinf} decomposition theorem]\label{thm:pgopinf}
	Assume that the data matrices~\cref{eq:data_matrices} satisfy~\cref{eq:data_satisfies_system_equations} for some $\residualTermStateTimeInt\in\R^{\dimStates\times\numDataPoints}$ and $\residualTermOutputTimeInt\in\R^{\dimOutputs\times\numDataPoints}$.
	Moreover, for some $\reductionBasis \in \R^{\dimStates \times \dimReducedStates}$ and $\reductionBasisLeft \in \R^{\dimStates \times \dimReducedStates}$, consider the Petrov--Galerkin \gls{opinf} problem~\cref{eq:pgopinf} with projected data according to~\cref{eq:projected_data_matrices_petrov_galerkin}.
	If the matrix $\rhsStatesInputDataReduced$ has full row rank, then the minimizers of the Petrov--Galerkin \gls{opinf} problem~\cref{eq:pgopinf} satisfy
	\begin{equation}\label{eq:thm:pgopinf}
		\begin{aligned}
			\begin{bmatrix}
			\stateSpaceSystemMatrixReducedIdentifiedMinimizer & \stateSpaceInputMatrixReducedIdentifiedMinimizer\\
			\stateSpaceOutputMatrixReducedIdentifiedMinimizer & \stateSpaceFeedthroughMatrixReducedIdentifiedMinimizer
			\end{bmatrix}
			&= \begin{bmatrix}
				\stateSpaceSystemMatrixReduced & \stateSpaceInputMatrixReduced\\
				\stateSpaceOutputMatrixReduced & \stateSpaceFeedthroughMatrixReduced
			\end{bmatrix} + \begin{bmatrix}
		\reductionBasisLeft\tran\stateSpaceSystemMatrix\orthogonalComplementStatesData + \reductionBasisLeft\tran\residualTermStateTimeInt\\
				\stateSpaceOutputMatrix\orthogonalComplementStatesData + \residualTermOutputTimeInt
			\end{bmatrix}
			\rhsStatesInputDataReduced\pinv,
		\end{aligned}
	\end{equation}
	where $\stateSpaceSystemMatrixReduced$, $\stateSpaceInputMatrixReduced$, $\stateSpaceOutputMatrixReduced$, and $\stateSpaceFeedthroughMatrixReduced$ are the intrusive Petrov--Galerkin reduced operators from \cref{eq:reduced_order_state_space_system}.
\end{theorem}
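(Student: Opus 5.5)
The plan is to compute the minimizer explicitly and then substitute the data identities. Since $\rhsStatesInputDataReduced$ has full row rank, $\rhsStatesInputDataReduced\rhsStatesInputDataReduced\tran$ is invertible. The Petrov--Galerkin \gls{opinf} problem~\cref{eq:pgopinf} therefore has the unique minimizer $\lhsDstatesOutputDataReduced\rhsStatesInputDataReduced\pinv$, and moreover $\rhsStatesInputDataReduced\rhsStatesInputDataReduced\pinv = \eye{\dimReducedStates+\dimInputs}$. Everything then reduces to writing $\lhsDstatesOutputDataReduced$ in the form
\begin{equation*}
\lhsDstatesOutputDataReduced = \begin{bmatrix} \stateSpaceSystemMatrixReduced & \stateSpaceInputMatrixReduced\\ \stateSpaceOutputMatrixReduced & \stateSpaceFeedthroughMatrixReduced \end{bmatrix}\rhsStatesInputDataReduced + (\text{correction}).
\end{equation*}
Multiplying this from the right by $\rhsStatesInputDataReduced\pinv$ gives~\cref{eq:thm:pgopinf}.

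\textbf{First block row.} Since $\stateSpaceDescriptorMatrixReduced = \reductionBasisLeft\tran\reductionBasis$, the definition~\cref{eq:projected_data_matrices_petrov_galerkin} gives $\stateSpaceDescriptorMatrixReduced\reducedDstatesData = \reductionBasisLeft\tran\DstatesData$. Inserting the first identity of~\cref{eq:data_satisfies_system_equations} and the decomposition~\cref{eq:data_decomposition} yields
\begin{equation*}
\reductionBasisLeft\tran\DstatesData = \reductionBasisLeft\tran\stateSpaceSystemMatrix\reductionBasis\reducedStatesData + \reductionBasisLeft\tran\stateSpaceSystemMatrix\orthogonalComplementStatesData + \reductionBasisLeft\tran\stateSpaceInputMatrix\inputData + \reductionBasisLeft\tran\residualTermStateTimeInt.
\end{equation*}
By~\cref{eq:reduced_order_state_space_system:mats}, this equals $\stateSpaceSystemMatrixReduced\reducedStatesData + \stateSpaceInputMatrixReduced\inputData + \reductionBasisLeft\tran\stateSpaceSystemMatrix\orthogonalComplementStatesData + \reductionBasisLeft\tran\residualTermStateTimeInt$.

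\textbf{Second block row.} In the same way,
\begin{equation*}
\outputData = \stateSpaceOutputMatrix\reductionBasis\reducedStatesData + \stateSpaceFeedthroughMatrix\inputData + \stateSpaceOutputMatrix\orthogonalComplementStatesData + \residualTermOutputTimeInt = \stateSpaceOutputMatrixReduced\reducedStatesData + \stateSpaceFeedthroughMatrixReduced\inputData + \stateSpaceOutputMatrix\orthogonalComplementStatesData + \residualTermOutputTimeInt.
\end{equation*}

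\textbf{Combining.} Stacking the two rows gives
\begin{equation*}
\lhsDstatesOutputDataReduced = \begin{bmatrix} \stateSpaceSystemMatrixReduced & \stateSpaceInputMatrixReduced\\ \stateSpaceOutputMatrixReduced & \stateSpaceFeedthroughMatrixReduced \end{bmatrix}\rhsStatesInputDataReduced + \begin{bmatrix} \reductionBasisLeft\tran\stateSpaceSystemMatrix\orthogonalComplementStatesData + \reductionBasisLeft\tran\residualTermStateTimeInt\\ \stateSpaceOutputMatrix\orthogonalComplementStatesData + \residualTermOutputTimeInt \end{bmatrix}.
\end{equation*}
Right-multiplying by $\rhsStatesInputDataReduced\pinv$ and using $\rhsStatesInputDataReduced\rhsStatesInputDataReduced\pinv = \eye{}$ gives exactly~\cref{eq:thm:pgopinf}.

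\textbf{Main obstacle.} There is no real difficulty; the only points needing care are bookkeeping. One must check that the decomposition~\cref{eq:data_decomposition} holds, i.e., that $\projector\statesData = \reductionBasis\reducedStatesData$ by the definition of $\projector$. One must also use the full row rank assumption to get both uniqueness of the minimizer and the identity $\rhsStatesInputDataReduced\rhsStatesInputDataReduced\pinv = \eye{}$.
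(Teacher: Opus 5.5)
Your proposal is correct and follows the paper's proof essentially step by step. It rewrites $\stateSpaceDescriptorMatrixReduced\reducedDstatesData = \reductionBasisLeft\tran\DstatesData$, inserts the data relations and the decomposition $\statesData = \reductionBasis\reducedStatesData + \orthogonalComplementStatesData$ to write $\lhsDstatesOutputDataReduced$ as the intrusive operators times $\rhsStatesInputDataReduced$ plus the correction, and then right-multiplies by $\rhsStatesInputDataReduced\pinv$ using $\rhsStatesInputDataReduced\rhsStatesInputDataReduced\pinv = \eye{\dimReducedStates+\dimInputs}$, which is exactly the paper's argument.
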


\begin{proof}
	The assumption that the data matrices \cref{eq:data_matrices} satisfy \cref{eq:data_satisfies_system_equations} implies
	\begin{equation}
		\label{eq:projected_dynamics_data_equation}
		\begin{aligned}
			\reductionBasisLeft\tran\DstatesData &= \reductionBasisLeft\tran\stateSpaceSystemMatrix \statesData + \reductionBasisLeft\tran\stateSpaceInputMatrix\inputData + \reductionBasisLeft\tran\residualTermStateTimeInt\\
			\outputData &= \stateSpaceOutputMatrix \statesData + \stateSpaceFeedthroughMatrix \inputData + \residualTermOutputTimeInt.
		\end{aligned}
	\end{equation}
	Define $\stateSpaceDescriptorMatrixReduced\vcentcolon=\reductionBasisLeft\tran\reductionBasis$, then
	\begin{align}\label{eq:relation_descriptor_to_proj_Dstate}
	 	\reductionBasisLeft\tran\DstatesData =  (\reductionBasisLeft\tran\reductionBasis)(\reductionBasisLeft\tran\reductionBasis)\inv\reductionBasisLeft\tran\DstatesData = \stateSpaceDescriptorMatrixReduced\reducedDstatesData.
	\end{align}
	Hence, together with the data decomposition \cref{eq:data_decomposition}, we can rewrite \cref{eq:projected_dynamics_data_equation} as
	\begin{equation}\label{eq:petrov_galerkin_intrusive_with_residual}
		\begin{aligned}
		\stateSpaceDescriptorMatrixReduced\reducedDstatesData 
			&= \reductionBasisLeft\tran\stateSpaceSystemMatrix\reductionBasis\reducedStatesData 		
			+ \reductionBasisLeft\tran\stateSpaceInputMatrix\inputData
			+ \reductionBasisLeft\tran\stateSpaceSystemMatrix\orthogonalComplementStatesData + \reductionBasisLeft\tran\residualTermStateTimeInt\\
			\outputData &= \stateSpaceOutputMatrix \reductionBasis\reducedStatesData + \stateSpaceFeedthroughMatrix \inputData + \stateSpaceOutputMatrix \orthogonalComplementStatesData + \residualTermOutputTimeInt,
		\end{aligned}
	\end{equation}
	which reads in block form as
	\begin{equation}
		\label{eq:block_form_petrov_galerkin_intrusive_with_residual}
		\begin{bmatrix}
			\stateSpaceDescriptorMatrixReduced\reducedDstatesData \\
			\outputData
		\end{bmatrix}
		=
		\begin{bmatrix}
			\stateSpaceSystemMatrixReduced & \stateSpaceInputMatrixReduced \\
			\stateSpaceOutputMatrixReduced & \stateSpaceFeedthroughMatrixReduced
		\end{bmatrix}
		\begin{bmatrix}
			\reducedStatesData \\
			\inputData
		\end{bmatrix}
		+
		% \underbrace{
		\begin{bmatrix}
			\reductionBasisLeft\tran\stateSpaceSystemMatrix\orthogonalComplementStatesData + \reductionBasisLeft\tran\residualTermStateTimeInt\\
			\stateSpaceOutputMatrix\orthogonalComplementStatesData + \residualTermOutputTimeInt
		\end{bmatrix}.
	\end{equation}
	Finally, a minimizer of the Petrov--Galerkin \gls{opinf} problem~\cref{eq:pgopinf} satisfies
	\begin{align*}
		\begin{bmatrix}
			\stateSpaceSystemMatrixReducedIdentifiedMinimizer & \stateSpaceInputMatrixReducedIdentifiedMinimizer \\
			\stateSpaceOutputMatrixReducedIdentifiedMinimizer & \stateSpaceFeedthroughMatrixReducedIdentifiedMinimizer
		\end{bmatrix} 
		= \lhsDstatesOutputDataReduced\rhsStatesInputDataReduced\pinv
		= \begin{bmatrix}
			\stateSpaceDescriptorMatrixReduced\reducedDstatesData \\
			\outputData
		\end{bmatrix} \rhsStatesInputDataReduced\pinv
		&= \parentheses{
			\begin{bmatrix}
			\stateSpaceSystemMatrixReduced & \stateSpaceInputMatrixReduced \\
			\stateSpaceOutputMatrixReduced & \stateSpaceFeedthroughMatrixReduced
		\end{bmatrix} 
		\rhsStatesInputDataReduced + 
		\begin{bmatrix}
			\reductionBasisLeft\tran\stateSpaceSystemMatrix\orthogonalComplementStatesData + \reductionBasisLeft\tran\residualTermStateTimeInt\\
			\stateSpaceOutputMatrix\orthogonalComplementStatesData + \residualTermOutputTimeInt
		\end{bmatrix}
		}\rhsStatesInputDataReduced\pinv \\
		&=\begin{bmatrix}
			\stateSpaceSystemMatrixReduced & \stateSpaceInputMatrixReduced \\
			\stateSpaceOutputMatrixReduced & \stateSpaceFeedthroughMatrixReduced
		\end{bmatrix} 
		\rhsStatesInputDataReduced\rhsStatesInputDataReduced\pinv + 
		\begin{bmatrix}
			\reductionBasisLeft\tran\stateSpaceSystemMatrix\orthogonalComplementStatesData + \reductionBasisLeft\tran\residualTermStateTimeInt\\
			\stateSpaceOutputMatrix\orthogonalComplementStatesData + \residualTermOutputTimeInt
		\end{bmatrix} \rhsStatesInputDataReduced\pinv
	\end{align*}
	The full row rank assumption for $\rhsStatesInputDataReduced$ implies $\rhsStatesInputDataReduced\rhsStatesInputDataReduced\pinv=\eye{\dimReducedStates+\dimInputs}$, which completes the proof.
\end{proof}

The above decomposition shows that \gls{opinf} can be interpreted as intrusive Petrov–-Galerkin projection augmented by a correction term that accounts for the influence of unresolved state components and time-discretization errors. The first term represents an aliasing effect caused by the projection of the full-order dynamics onto the reduced trial space before the regression step is performed. To this end we introduce the notation
\begin{equation}\label{eq:def_residual_terms}
	\residualTermState \vcentcolon= \reductionBasisLeft\tran\stateSpaceSystemMatrix\orthogonalComplementStatesData + \reductionBasisLeft\tran\residualTermStateTimeInt \qquad\text{and} \qquad 
	\residualTermOutput \vcentcolon= \stateSpaceOutputMatrix\orthogonalComplementStatesData + \residualTermOutputTimeInt.
\end{equation}

\begin{remark}\label{rem:representation_e}
	From~\cref{eq:block_form_petrov_galerkin_intrusive_with_residual} follows that the terms $\residualTermState$ and $\residualTermOutput$ can be equivalently represented as
	\begin{equation*}
		\mathcal{E}_x = \stateSpaceDescriptorMatrixReduced\reducedDstatesData - \stateSpaceSystemMatrixReduced \reducedStatesData - \stateSpaceInputMatrixReduced\inputData
		\quad\text{and}\quad
		\mathcal{E}_y = \outputData - \stateSpaceOutputMatrixReduced\reducedStatesData - \stateSpaceFeedthroughMatrixReduced\inputData.
	\end{equation*}
	This representation for $\residualTermState$ and $\residualTermOutput$ is consistent 
	with the results reported in~\cite{Peherstorfer20}, where the data is obtained via a so-called reprojection algorithm.
	In more detail, for a given $\reductionBasis$, the reprojection algorithm described in \cite[Fig.~2, Alg.~1]{Peherstorfer20} 
	constructs the data iteratively. In each step of the sampling process, the current state is first projected onto 
	the reduced space and directly lifted back to the full space. The lifted state is subsequently used to evaluate the 
	full-order dynamics in order to generate the next state sample for one time step. Under certain conditions, the 
	resulting reprojected trajectories coincide with those obtained from the intrusive \gls{rom}. As a consequence, 
	the error terms $\residualTermState$ and $\residualTermOutput$ vanish, and the operators identified by \gls{opinf} 
	coincide with the intrusive operators of the Petrov--Galerkin projection, which is consistent with \cite[Cor.~3.2]{Peherstorfer20}.
\end{remark}

\subsubsection{Convergence Analysis of Petrov--Galerkin Operator Inference}

\Cref{thm:pgopinf} provides an exact representation of the error between the intrusive and nonintrusive reduced operators. The error terms can be interpreted as follows.
First, if the time-discretization scheme used to construct the data is convergent, then, similarly to \cite[Thm.~1]{PeherstorferWillcox16}, we can expect the discretization errors $\residualTermStateTimeInt$ and $\residualTermOutputTimeInt$ to converge to zero, i.e., the error is dominated by $\reductionBasisLeft\tran\stateSpaceSystemMatrix\orthogonalComplementStatesData\rhsStatesInputDataReduced\pinv$ and $\stateSpaceOutputMatrix\orthogonalComplementStatesData\rhsStatesInputDataReduced\pinv$.
To further analyze these terms, we have to investigate the product $\orthogonalComplementStatesData\rhsStatesInputDataReduced\pinv$ in more detail. Consider the singular value decomposition of $\statesData$ given as
	\begin{align}
	\label{eqn:dataSVD}
	\statesData = \begin{bmatrix}
		\svdUa & \svdUb
	\end{bmatrix}\begin{bmatrix}
		\svdSa & \zeros\\
		\zeros & \svdSb
	\end{bmatrix}\begin{bmatrix}
		\svdVa\tran \\ \svdVb\tran
	\end{bmatrix}, \qquad \begin{aligned}
		\svdSa &= \mathrm{diag}(\sigma_1,\ldots,\sigma_{\dimReducedStates}),\\
		\svdSb &= \begin{bmatrix}\mathrm{diag}(\sigma_{\dimReducedStates+1},\ldots,\sigma_k) & \zeros\\\zeros & \zeros \end{bmatrix},
	\end{aligned}
	\end{align}
	with $[\svdUa,\svdUb]$ and $[\svdVa,\svdVb]$ orthogonal, $k = \rank(\statesData)$, and $\sigma_{\dimReducedStates} > \sigma_{\dimReducedStates+1}$. Let $\reductionBasis=\svdUa$, $\reductionBasisLeft = \reductionBasis$, and define $\inputDataProja \vcentcolon= \inputData \svdVa$ and $\inputDataProjb \vcentcolon= \inputData\svdVb$. We thus compute
\begin{align*}
	\rhsStatesInputDataReduced &= \begin{bmatrix}
		(\reductionBasisLeft^\T \reductionBasis)^{-1} \reductionBasisLeft^\T\statesData\\
		\inputData
	\end{bmatrix} = \begin{bmatrix}
		(\reductionBasisLeft^\T \reductionBasis)^{-1} \reductionBasisLeft^\T \big(\reductionBasis \svdSa \svdVa^\T + \svdUb \svdSb \svdVb^\T\big)\\
		\inputDataProja \svdVa^\T + \inputDataProjb \svdVb^\T
	\end{bmatrix}\\
	&= \begin{bmatrix}
		\svdSa  &  (\reductionBasisLeft^\T \reductionBasis)^{-1} \reductionBasisLeft^\T\svdUb \svdSb\\
		\inputDataProja & \inputDataProjb
	\end{bmatrix}\begin{bmatrix}
		\svdVa^\T\\
		\svdVb^\T
	\end{bmatrix}.
\end{align*}
Note that $\orthogonalComplementStatesData = \svdUb \svdSb \svdVb^\T$  and hence
\begin{align*}
	\orthogonalComplementStatesData\rhsStatesInputDataReduced\pinv = \svdUb\svdSb\begin{bmatrix}
		\zeros & \eye{\numDataPoints-\dimReducedStates}
	\end{bmatrix}\begin{bmatrix}
		\svdSa  &  (\reductionBasisLeft^\T \reductionBasis)^{-1} \reductionBasisLeft^\T\svdUb \svdSb\\
		\inputDataProja & \inputDataProjb
	\end{bmatrix}\pinv.
\end{align*}
In general, computing the pseudoinverse of a block matrix in terms of the original matrix blocks is involved and still an active research area. See for instance~\cite{ZhaMM26} and the references therein for some recent results. However, if we consider a special case of a Galerkin projection, i.e., $\reductionBasisLeft = \reductionBasis$ and use the \gls{pod} basis for the projection, then we obtain, for the single input case, the following result.

\begin{theorem}
	\label{thm:OpInfNoConv}
	Assume a single input, i.e., $\dimInputs = 1$. 
	If $\reductionBasisLeft = \reductionBasis=\svdUa$ (as in \cref{eqn:dataSVD}) and $\sigma_{\dimReducedStates} > \sigma_{\dimReducedStates+1}$, then
	\begin{align}
		\label{eqn:GalerkinOIerrorBound}
		\frobNorm*{\orthogonalComplementStatesData\rhsStatesInputDataReduced\pinv} \leq \frac{\sigma_{\dimReducedStates+1}}{\sigma_{\dimReducedStates}} \frobNorm*{\inputDataProjb\pinv} \sqrt{\|\inputDataProja\|_{\text{F}}^2 + \dimInputs\sigma_{\dimReducedStates}^2}.
	\end{align}
\end{theorem}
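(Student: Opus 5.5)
The plan is to make the expression for $\orthogonalComplementStatesData\rhsStatesInputDataReduced\pinv$ derived just before the theorem fully explicit in the Galerkin/POD case, and then bound it. Throughout, I assume, as in \Cref{thm:pgopinf}, that $\rhsStatesInputDataReduced$ has full row rank; otherwise $\frobNorm*{\inputDataProjb\pinv}$ is not meaningful as a bound.

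\textbf{Step 1: reduce to a pseudoinverse of a small block matrix.} With $\reductionBasisLeft=\reductionBasis=\svdUa$ we have $\reductionBasisLeft\tran\reductionBasis=\eye{\dimReducedStates}$ and $\reductionBasisLeft\tran\svdUb=\svdUa\tran\svdUb=\zeros$. The off-diagonal block in the factorization above therefore vanishes, and
\begin{equation*}
	\rhsStatesInputDataReduced = M\begin{bmatrix}\svdVa\tran\\ \svdVb\tran\end{bmatrix},
	\qquad
	M \vcentcolon= \begin{bmatrix}\svdSa & \zeros\\ \inputDataProja & \inputDataProjb\end{bmatrix}.
\end{equation*}
Since $[\svdVa,\svdVb]$ is square orthogonal, the pseudoinverse factors as $\rhsStatesInputDataReduced\pinv=[\svdVa,\svdVb]M\pinv$. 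Consequently
\begin{equation*}
	\orthogonalComplementStatesData\rhsStatesInputDataReduced\pinv=\svdUb\svdSb\begin{bmatrix}\zeros & \eye{}\end{bmatrix}M\pinv .
\end{equation*}

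\textbf{Step 2: compute $M\pinv$ explicitly.} This is the key step, and it is where the single-input assumption enters. For $\dimInputs=1$, the block $\inputDataProjb$ is a row vector. Full row rank of $M$ (equivalently of $\rhsStatesInputDataReduced$) forces $\inputDataProjb\neq\zeros$, so $\inputDataProjb\inputDataProjb\pinv=1$ with $\inputDataProjb\pinv=\inputDataProjb\tran/\|\inputDataProjb\|^2$. I will guess the candidate
\begin{equation*}
	X=\begin{bmatrix}\svdSa\inv & \zeros\\ -\inputDataProjb\pinv\inputDataProja\svdSa\inv & \inputDataProjb\pinv\end{bmatrix}
\end{equation*}
and verify two facts. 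First, $MX=\eye{\dimReducedStates+1}$, which is a direct check using $\inputDataProjb\inputDataProjb\pinv=1$. Second, the range of $X$ lies in the range of $M\tran$. For the second fact, note that every column of $X$ has the form $[a;\,c\,\inputDataProjb\tran]$, and every such vector lies in the range of $M\tran=\begin{bmatrix}\svdSa & \inputDataProja\tran\\ \zeros & \inputDataProjb\tran\end{bmatrix}$ because $\svdSa$ is invertible. A right inverse whose range lies in $\operatorname{range}(M\tran)$ is the minimum-norm right inverse, so $X=M\pinv$.

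I expect this identification to be the main obstacle. For $\dimInputs>1$ the identity $\inputDataProjb\inputDataProjb\pinv=\eye{}$ still holds under full row rank, but the range condition may require more care. The statement only concerns $\dimInputs=1$, however.

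\textbf{Step 3: estimate.} Extracting the last block row of $X$ gives
\begin{equation*}
	\orthogonalComplementStatesData\rhsStatesInputDataReduced\pinv=\svdUb\svdSb\,\inputDataProjb\pinv\begin{bmatrix}-\inputDataProja\svdSa\inv & 1\end{bmatrix}.
\end{equation*}
I then bound the Frobenius norm in three moves:
\begin{itemize}
	\item using $\|\svdUb\svdSb Y\|_{\mathrm F}\le\sigma_{\dimReducedStates+1}\|Y\|_{\mathrm F}$, since $\svdUb$ has orthonormal columns and $\|\svdSb\|_2=\sigma_{\dimReducedStates+1}$;
	\item using that the Frobenius norm of the rank-one matrix $\inputDataProjb\pinv[\,\cdot\,]$ equals $\|\inputDataProjb\pinv\|_{\mathrm F}$ times the norm of the row vector;
	\item using $\|\inputDataProja\svdSa\inv\|_{\mathrm F}\le\|\inputDataProja\|_{\mathrm F}/\sigma_{\dimReducedStates}$.
\end{itemize}
This yields $\sigma_{\dimReducedStates+1}\|\inputDataProjb\pinv\|_{\mathrm F}\sqrt{\|\inputDataProja\|_{\mathrm F}^2/\sigma_{\dimReducedStates}^2+1}$. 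Factoring out $1/\sigma_{\dimReducedStates}$ gives exactly \cref{eqn:GalerkinOIerrorBound} with $\dimInputs=1$.
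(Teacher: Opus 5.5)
Your argument for the case $\inputDataProjb\neq\zeros$ is correct and is essentially the paper's. You use the same factorization $\rhsStatesInputDataReduced=M[\svdVa,\svdVb]\tran$, the same explicit block formula for $M\pinv$, and the same three norm estimates. The only difference is how you obtain $M\pinv$. The paper cites the Banachiewicz--Schur form for pseudoinverses, whereas you verify it directly as the right inverse whose range lies in $\operatorname{range}(M\tran)$. That check is self-contained. It also carries over verbatim to $\dimInputs>1$ whenever $\rhsStatesInputDataReduced$ has full row rank, so the range condition you flagged needs no extra care there.

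The gap is the hypothesis you add at the outset. The theorem does not assume that $\rhsStatesInputDataReduced$ has full row rank. Since $\svdSa$ is invertible (because $\sigma_{\dimReducedStates}>\sigma_{\dimReducedStates+1}\ge 0$), for $\dimInputs=1$ full row rank of $M$ is equivalent to $\inputDataProjb\neq\zeros$. So your assumption excludes exactly the case $\inputDataProjb=\zeros$.

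Your reason for excluding it is also not right. In that case $\inputDataProjb\pinv=\zeros$ is perfectly well defined, and the bound then asserts $\orthogonalComplementStatesData\rhsStatesInputDataReduced\pinv=\zeros$. This is a genuine part of the statement, even if an easy one. It is precisely what the paper uses right after the theorem for autonomous systems ($\inputData=\zeros$), where $\rhsStatesInputDataReduced$ is never of full row rank. The paper's proof handles it by splitting into the cases $\inputDataProjb=\zeros$ and $\inputDataProjb\neq\zeros$.

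You can close the gap with the same range fact you already use. If $\inputDataProjb=\zeros$, then $M\tran=\begin{bmatrix}\svdSa & \inputDataProja\tran\\ \zeros & \zeros\end{bmatrix}$. Since $\operatorname{range}(M\pinv)=\operatorname{range}(M\tran)$, every column of $M\pinv$ has vanishing last $\numDataPoints-\dimReducedStates$ entries. Hence $\begin{bmatrix}\zeros & \eye{\numDataPoints-\dimReducedStates}\end{bmatrix}M\pinv=\zeros$, the left-hand side is zero, and the bound holds (both sides vanish). With this case added, your proof covers the full statement.
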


\begin{proof}
	Using $\reductionBasisLeft = \svdUa$, we obtain
	\begin{align*}
	\begin{bmatrix}
		\svdSa  &  (\reductionBasisLeft^\T \reductionBasis)^{-1} \reductionBasisLeft^\T\svdUb \svdSb\\
		\inputDataProja & \inputDataProjb
	\end{bmatrix} = \begin{bmatrix}
		\svdSa  &  \zeros\\
		\inputDataProja & \inputDataProjb
	\end{bmatrix}
	\end{align*}
	We distinguish the cases $\inputDataProjb=\zeros$ and $\inputDataProjb\neq\zeros$. First, let us consider the case $\inputDataProjb=\zeros$. Then $\inputDataProjb\pinv = \zeros$ and
	\begin{align*}
	\begin{bmatrix}
		\zeros & \eye{\numDataPoints-\dimReducedStates}
	\end{bmatrix}\begin{bmatrix}
		\svdSa  &  \zeros\\
		\inputDataProja & \zeros
	\end{bmatrix}\pinv = \zeros,
	\end{align*}
	such that~\eqref{eqn:GalerkinOIerrorBound} is trivially satisfied. On the other hand, if $\inputDataProjb\neq \zeros$, then $\mathrm{rank}(\inputDataProjb) = 1 = \dimInputs$. Hence, the Banachiewicz--Schur form for the pseudoinverse (see~\cite[Cor.\,1]{Mia91} or \cite[Cor.~2]{BakS02}) implies
	\begin{align*}
		\orthogonalComplementStatesData\rhsStatesInputDataReduced\pinv &= \svdUb\svdSb\begin{bmatrix}
		\zeros & \eye{\numDataPoints-\dimReducedStates}
	\end{bmatrix}
	\begin{bmatrix}
		\svdSa^{-1} & \zeros\\
		-\inputDataProjb\pinv\inputDataProja \svdSa^{-1} & \inputDataProjb\pinv
	\end{bmatrix} = \svdUb \svdSb \inputDataProjb\pinv \begin{bmatrix}
		-\inputDataProja \svdSa^{-1} & \eye{\dimInputs}
	\end{bmatrix}.
	\end{align*}
	We conclude, using standard results for the sub-multiplicativity of the Frobenius norm and the spectral norm, i.e., for arbitrary matrices $\arbitraryMatrixA$ and $\arbitraryMatrixB$ of compatible dimensions,
	\begin{equation*}
		\frobNorm{\arbitraryMatrixA\arbitraryMatrixB} \leq \twoNorm{\arbitraryMatrixA} \frobNorm{\arbitraryMatrixB} \quad\text{and}\quad \frobNorm{\arbitraryMatrixA\arbitraryMatrixB} \leq \frobNorm{\arbitraryMatrixA} \twoNorm{\arbitraryMatrixB},
	\end{equation*}
	that
	\begin{align*}
		\frobNorm{\orthogonalComplementStatesData\rhsStatesInputDataReduced\pinv} 
		& \leq \|\svdSb\|_{2} \|\inputDataProjb\pinv\|_{\text{F}} \|\begin{bmatrix}
		-\inputDataProja \svdSa^{-1} & \eye{\dimInputs}
		\end{bmatrix}\|_{\text{F}}
		= \|\svdSb\|_{2} \|\inputDataProjb\pinv\|_{\text{F}} \sqrt{\|\inputDataProja \svdSa^{-1}\|_{\text{F}}^2 + \dimInputs} \\
		&\leq \|\svdSb\|_{2} \|\inputDataProjb\pinv\|_{\text{F}}\sqrt{\|\inputDataProja\|_{\text{F}}^2  \|\svdSa^{-1}\|_{2}^2 + \dimInputs}
		= \frac{\sigma_{\dimReducedStates+1}}{\sigma_{\dimReducedStates}}
		\|\inputDataProjb\pinv\|_{\text{F}} \sqrt{\|\inputDataProja\|_{\text{F}}^2 + \dimInputs \sigma_{\dimReducedStates}^{2}}.\qedhere
	\end{align*}
\end{proof}

We make the following observations.
\begin{enumerate}
	\label{en:zero-inp}
	% U_2 = 0
	\item If $\inputDataProjb = \zeros$, which is for instance the case if $\inputData = \zeros$, i.e., the case of autonomous systems, then $\orthogonalComplementStatesData\rhsStatesInputDataReduced\pinv = \zeros$. Assuming that~$\reducedStatesData$ has full row rank, we thus conclude
	\begin{align*}
	\left\|\begin{bmatrix}
		\stateSpaceSystemMatrixReducedIdentifiedMinimizer\\\stateSpaceOutputMatrixReducedIdentifiedMinimizer
	\end{bmatrix} - \begin{bmatrix}
		\stateSpaceSystemMatrixReduced\\\stateSpaceOutputMatrixReduced
	\end{bmatrix}\right\|_{\text{F}}\to 0\qquad\text{for } \left\|\begin{bmatrix}\residualTermStateTimeInt\\\residualTermOutputTimeInt\end{bmatrix}\right\|_{\text{F}}\to 0,
	\end{align*}
	which is an even stronger result than \cite[Thm.~1]{PeherstorferWillcox16}, as it does not require the reduced dimension to approach the full dimension.
	% U \neq 0
	\item If $\inputData \neq \zeros$, convergence of $\frobNorm*{\orthogonalComplementStatesData\rhsStatesInputDataReduced\pinv}$ to $0$ is not immediately guaranteed from~\eqref{eqn:GalerkinOIerrorBound}. In particular, since we often have $\sigma_{\dimReducedStates+1} / \sigma_{\dimReducedStates} \approx \alpha$ for some $\alpha\in(0,1)$, there is no inherent reason for
    \begin{align*}
        \big\|\inputDataProjb\pinv\big\|_{\text{F}} \sqrt{\|\inputDataProja\|_{\text{F}}^2 + \dimInputs \sigma_{\dimReducedStates}^2} \geq \big\|\inputDataProjb\pinv\big\|_{\text{F}} \|\inputDataProja\|_{\text{F}}
    \end{align*}
    to decay, except in the limiting case $\dimReducedStates = \dimStates$.
\end{enumerate}

\Cref{thm:OpInfNoConv} therefore suggests that, except in special cases such as $\inputData=\zeros$ or $r=n$, Galerkin \gls{opinf} cannot in general be expected to recover the operators obtained by intrusive projection. While \Cref{thm:OpInfNoConv} focuses on the case $\reductionBasisLeft = \reductionBasis$, analogous limitations are expected when $\reductionBasisLeft \neq \reductionBasis$.

\begin{remark}
	The restriction to scalar inputs, i.e., $\dimInputs = 1$ is primarily technical and arises from the pseudoinverse block formula used in the proof. Similar bounds are expected to hold for multiple inputs, although the proof in its current form would require additional range assumptions and are out of the scope of this paper.
\end{remark}

\subsection{Port-Hamiltonian Petrov--Galerkin Operator Inference}
\label{sec:method:linear_ph_identification}
The inference problem of a linear reduced \gls{ph} system in its most general form can be cast as a constrained optimization problem in block matrix form as
\begin{equation}
    \label{eq:general_linear_reduced_ph_optimization_problem}
    \begin{aligned}
        \min_{\phStructureMatrixReducedIdentified,\phGeneralDissipationMatrixReducedIdentified,\phEReducedIdentified,\phQReducedIdentified}\ &\norm{\begin{bmatrix}
            \phEReducedIdentified\reducedDstatesData \\ -\outputData
        \end{bmatrix} - 
        \parentheses{\phStructureMatrixReducedIdentified - \phGeneralDissipationMatrixReducedIdentified}
        \begin{bmatrix} 
            \phQReducedIdentified\reducedStatesData \\ \inputData 
        \end{bmatrix}}_{F}, \\    
        \text{subject to}\quad
        &\phStructureMatrixReducedIdentified = -\phStructureMatrixReducedIdentified\tran,\\ 
        &\phGeneralDissipationMatrixReducedIdentified = \phGeneralDissipationMatrixReducedIdentified\tran \succeq 0,\\ 
        &\phEReducedIdentified\tran \phQReducedIdentified = \phQReducedIdentified\tran \phEReducedIdentified \succ 0,
    \end{aligned}    
\end{equation}
with
\begin{align}
\phStructureMatrixReducedIdentified := 
\begin{bmatrix}
    \phJReducedIdentified & \phInputMatrixReducedIdentified \\
    -\phInputMatrixReducedIdentified\tran & \phFeedthroughOutputMatrixSkewReducedIdentified
\end{bmatrix}, 
\quad 
\phGeneralDissipationMatrixReducedIdentified := 
\begin{bmatrix}
    \phRReducedIdentified & \phFeedthroughInputMatrixReducedIdentified \\
    \phFeedthroughInputMatrixReducedIdentified\tran & \phFeedthroughOutputMatrixSymReducedIdentified
\end{bmatrix},
\end{align}
where all \gls{ph} matrices are the identified counterparts, defined analogously to those in \cref{eq:pg:intrusive:rom:ph}.
The optimization problem \cref{eq:general_linear_reduced_ph_optimization_problem} constitutes a constrained, nonlinear, and nonconvex problem, which is generally difficult to solve.

In the following, we pursue two complementary routes to address this challenge.
In \cref{sec:method:ph_petrov_galerkin_opinf}, we extend the Petrov--Galerkin \gls{opinf} framework from \cref{sec:pgopinf} to \gls{ph} systems under the assumption that the Hessian of the Hamiltonian $\phQ$ is known.
To relax this requirement, \cref{sec:method:hamiltonian_based_phQ_identification} augments this approach with a data-driven procedure for identifying $\phQ$ from Hamiltonian data.
As an alternative route, \cref{sec:method:ph_convex_optimization} presents a one-shot convex formulation that does not require prior knowledge of the Hamiltonian.

%%%%%%%%%%%%%%%%%%%%%%%%%%%%%%%%%%%%%%%%
\subsubsection{Port-Hamiltonian Petrov--Galerkin Operator Inference}
\label{sec:method:ph_petrov_galerkin_opinf}
%%%%%%%%%%%%%%%%%%%%%%%%%%%%%%%%%%%%%%%%
For the moment, we assume to have access to the Hessian $\phQ$ of the Hamiltonian \cref{eq:hamiltonian_quadratic_form}, which has also been done in \cite{MorandinEtAl23}. 
Let us review the Petrov--Galerkin \gls{opinf} approach from \cref{sec:pgopinf} in the context of inferring reduced \gls{ph} systems.
From \cref{eq:pg:intrusive:rom:ph:sys} we have seen that an intrusive reduced \gls{ph} system can be obtained through the Petrov--Galerkin projection of a full-order \gls{ph} system with a test basis $\reductionBasisLeft=\phQ\reductionBasis$.
We now insert this choice of test basis into the projection of the data as in \cref{eq:projected_data_matrices_petrov_galerkin}, i.e.,
\begin{equation}
	\label{eq:projected_data_matrices_petrov_galerkin_ph}
	\begin{aligned}
		\reducedStatesData &\vcentcolon= (\reductionBasis\tran\phQ\reductionBasis)\inv\reductionBasis\tran\phQ\statesData, &
		\reducedDstatesData &\vcentcolon= (\reductionBasis\tran\phQ\reductionBasis)\inv\reductionBasis\tran\phQ\DstatesData,
	\end{aligned}	
\end{equation}
and further solve the Petrov--Galerkin \gls{opinf} problem \cref{eq:pgopinf} with the projected data including $\phEReduced=\reductionBasisLeft\tran\reductionBasis=\reductionBasis\tran\phQ\tran\reductionBasis$.

We obtain identified reduced operators $\pHStateSpaceSystemMatrixReducedIdentified\in\R^{\dimReducedStates\times\dimReducedStates}$, $\pHStateSpaceInputMatrixReducedIdentified\in\R^{\dimReducedStates\times\dimInputs}$, $\pHStateSpaceOutputMatrixReducedIdentified\in\R^{\dimInputs\times\dimReducedStates}$, and $\pHStateSpaceFeedthroughMatrixReducedIdentified\in\R^{\dimInputs\times\dimInputs}$ that converge to
\begin{align}
	% A and B
	\begin{bmatrix}
		\pHStateSpaceSystemMatrixReducedIdentified & \pHStateSpaceInputMatrixReducedIdentified\\
		\pHStateSpaceOutputMatrixReducedIdentified & \pHStateSpaceFeedthroughMatrixReducedIdentified
	\end{bmatrix} = \begin{bmatrix}
		\phJReduced - \phRReduced & \phInputMatrixReduced - \phFeedthroughInputMatrixReduced\\
		\phInputMatrixReduced + \phFeedthroughInputMatrixReduced & \phFeedthroughOutputMatrixSymReduced + \phFeedthroughOutputMatrixSkewReduced
	\end{bmatrix} + \begin{bmatrix}
		\reductionBasis\tran\phQ\tran(\phJ-\phR)\phQ\orthogonalComplementStatesData \rhsStatesInputDataReduced\pinv \\
		(\phInputMatrix + \phFeedthroughInputMatrix)\tran
		\end{bmatrix}\orthogonalComplementStatesData \rhsStatesInputDataReduced\pinv,
\end{align}
if the original data has been calculated via a convergent time integration scheme of a full-order \gls{ph} system \cref{eq:linear_ph_system_with_feedthrough} and the data matrices have full column rank.
Here, the residual term is calculated with the \gls{ph} projector $\projectorPH\vcentcolon=\reductionBasis(\reductionBasis\tran\phQ\reductionBasis)\inv\reductionBasis\tran\phQ\in\R^{\dimStates\times\dimStates}$ as $\orthogonalComplementStatesData = (\eye{\dimStates} - \projectorPH)\statesData$.

% %%%%%%%%%%%%%%%%%%%%%%%%%%%%%%%%%%%%%%%%
\subsubsection{Inferring the Hessian of the Hamiltonian}
\label{sec:method:hamiltonian_based_phQ_identification}
%%%%%%%%%%%%%%%%%%%%%%%%%%%%%%%%%%%%%%%%%%
Having access to $\phQ$ from \cref{eq:hamiltonian_quadratic_form} might not always be possible.
In certain cases, the internal energy $\hamiltonian(\cdot)$ or an approximation of it may be accessible from simulations or measurements.
For example, many commercial finite element software packages allow for the extraction of energy quantities.
Motivated by this, we propose to estimate the Hessian of the Hamiltonian matrix directly from data of the Hamiltonian; cf.~\cite{Nicodemus25}.
This leads to the optimization problem
\begin{equation}\label{eq:hamiltonian_based_phQ_identification:opt}
	\min_{\phQIdentifiedHamiltonian = \phQIdentifiedHamiltonian\tran \succ 0} \sum_{i=1}^{\numDataPoints} \norm{\hamiltonian(\states(\timee_i)) - \tfrac{1}{2}\states_i\tran\phQIdentifiedHamiltonian\states_i}_{\indexFrobNorm}.
\end{equation}

To rewrite~\eqref{eq:hamiltonian_based_phQ_identification:opt} into a linear least-squares problem, we will rely on Kronecker algebra. Due to the inherent symmetry of the quadratic form, the Kronecker product includes duplicated terms (e.g., $\state_1\state_2$ and $\state_2\state_1$). These can be removed using the duplication matrix $\duplicateMat \in \R^{\dimStates^2 \times \frac{\dimStates(\dimStates+1)}{2}}$ and the half-vectorization operation $\mathrm{vech}\colon\R^{\dimStates\times\dimStates}\to\R^{ \frac{\dimStates(\dimStates+1)}{2}}$; cf.~\cite{MagnusNeudecker80}.
Ignoring, for the moment, the positive semidefinite constraint, the optimization problem~\eqref{eq:hamiltonian_based_phQ_identification:opt} can be recast as a linear least-squares problem of the form
\begin{align}
    \label{eq:hamiltonian_identification_least_square_problem}
	\min_{\vech{\phQIdentifiedHamiltonian}}
	\Biggl\|
	\begin{bmatrix}
        \hamiltonian(\states(\timee_1)) \\ \vdots \\ \hamiltonian(\states(\timee_{\numDataPoints}))
    \end{bmatrix} -
    \tfrac{1}{2}\underbrace{\begin{bmatrix}
        \states_1 \kron \states_1 \\
        \vdots \\
        \states_{\numDataPoints} \kron \states_{\numDataPoints}
    \end{bmatrix}}_{\statesDataKron}
    \duplicateMat
    \vech{\phQIdentifiedHamiltonian}\Biggr\|_2,
\end{align}
where $\kron$ denotes the Kronecker product, resulting in the Kronecker data matrix $\statesDataKron \in \R^{\numDataPoints \times \dimStates^2}$.
Note that \cref{eq:hamiltonian_identification_least_square_problem} requires at least $\numDataPoints\geq\frac{\dimStates(\dimStates+1)}{2}$ independent data points for a unique solution.

Numerical experiments indicate that when the number of data points is insufficient, the resulting matrix $\phQIdentifiedHamiltonian$ is typically indefinite, exhibiting negative eigenvalues of non-negligible magnitude. In such cases, one may project onto the cone of positive definite matrices; however, this can yield a poor approximation of the true Hessian of the Hamiltonian $\phQ$ due to the induced projection error. Therefore, exploiting the fact that~\cref{eq:hamiltonian_based_phQ_identification:opt} defines a convex optimization problem, we propose, in the small-data regime, to instead solve the problem directly with a positive definiteness constraint via semidefinite programming.

%%%%%%%%%%%%%%%%%%%%%%%%%%%%%%%%%%%%%%%%
\subsubsection{Port-Hamiltonian Inference Through Convex Optimization}
\label{sec:method:ph_convex_optimization}
%%%%%%%%%%%%%%%%%%%%%%%%%%%%%%%%%%%%%%%%
As mentioned, the inference of \gls{ph} systems using Petrov--Galerkin \gls{opinf} requires knowledge of the Hessian of the Hamiltonian $\phQ$ (\cref{sec:method:ph_petrov_galerkin_opinf}) or data from the Hamiltonian (\cref{sec:method:hamiltonian_based_phQ_identification}).
The \gls{ph} properties originate from the assumption that the residual term becomes small and we converge towards the intrusive operators.

In \cref{sec:intro:literature:ph_system_identification}, we surveyed existing approaches to tackle the challenging \gls{ph} optimization problem \cref{eq:general_linear_reduced_ph_optimization_problem}.
These range from solving the problem with nonlinear optimization techniques or reformulating it into an unconstrained problem (for the state equation) as
\begin{align}
    \label{eq:unconstrained_optimization_problem}
    \min \norm{\DstatesData - (\unconstrainedPHJ-\unconstrainedPHJ\tran-\unconstrainedPHR\unconstrainedPHR\tran)\unconstrainedPHQ\unconstrainedPHQ\tran\statesData}
\end{align}
which increases the nonconvexity by introducing parameter cross-terms \cite{GoyPB25,GoyalEtAl23a}.
In this section, we pursue another approach that directly incorporates the constraints on the matrices and does not require any knowledge of $\phQ$.

The first important observation is that in \cref{eq:general_linear_reduced_ph_optimization_problem}, the term $\phQReducedIdentified$ in combination with the operators $\phJReducedIdentified$ and $\phRReducedIdentified$ leads to nonconvexity.
In \cite{MehrmannUnger23} it is described how to remove the $\phQ$ from the \gls{fom} \cref{eq:linear_ph_system_with_feedthrough} by simply multiplying the equation by $\phQ\tran$ on the left, which leads
\begin{subequations}
\label{eq:linear_berlin_ph_system_with_feedthrough}
\begin{equation}
\label{eq:linear_berlin_ph_system_with_feedthrough:sys}
\begin{aligned}
\phQ\tran\Dstates(\timee) &= (\phJBerlin-\phRBerlin)\states(\timee) + (\phInputMatrixBerlin - \phFeedthroughInputMatrixBerlin) \phInputs(\timee),
\quad \states(0) = \initialConditionStates, \\
\phOutputs(\timee) &= (\phInputMatrixBerlin + \phFeedthroughInputMatrixBerlin)\tran\states(\timee) + (\phFeedthroughOutputMatrixSymBerlin + \phFeedthroughOutputMatrixSkewBerlin)\phInputs(\timee),
\end{aligned}
\end{equation}
where the transformed matrices are defined as
\begin{equation}\label{eq:linear_berlin_ph_system_with_feedthrough:mats}
	\begin{aligned}
    \phJBerlin &:= \phQ\tran\phJ\phQ,
    &\phRBerlin &:= \phQ\tran\phR\phQ,
    &\phInputMatrixBerlin &:= \phQ\tran\phInputMatrix,\\
    \phFeedthroughInputMatrixBerlin &:= \phQ\tran\phFeedthroughInputMatrix,
    &\phFeedthroughOutputMatrixSymBerlin &:= \phFeedthroughOutputMatrixSym,
    &\phFeedthroughOutputMatrixSkewBerlin &:= \phFeedthroughOutputMatrixSkew.
	\end{aligned}
\end{equation}
\end{subequations}
Performing a Galerkin projection on the transformed system \cref{eq:linear_berlin_ph_system_with_feedthrough} leads to the same reduced system as in \cref{eq:pg:intrusive:rom:ph}.
From the optimization problem \cref{eq:general_linear_reduced_ph_optimization_problem}, we can now eliminate $\phQReducedIdentified$ using the transformation introduced above.
We are left with
\begin{equation}
    \label{eq:convex_linear_reduced_ph_optimization_problem}
    \begin{aligned}
        \min_{\phStructureMatrixReducedIdentified,\phGeneralDissipationMatrixReducedIdentified,\phEReducedIdentified}\ &\norm{\begin{bmatrix}
            \phEReducedIdentified\reducedDstatesData \\ -\outputData
        \end{bmatrix} - 
        \parentheses{\phStructureMatrixReducedIdentified - \phGeneralDissipationMatrixReducedIdentified}
        \begin{bmatrix} 
            \reducedStatesData \\ \inputData 
        \end{bmatrix}}_{\indexFrobNorm}, \\    
        \text{subject to}\quad
        &\phStructureMatrixReducedIdentified = -\phStructureMatrixReducedIdentified\tran,\\ 
        &\phGeneralDissipationMatrixReducedIdentified = \phGeneralDissipationMatrixReducedIdentified\tran \succeq 0,\\ 
        &\phEReducedIdentified\tran = \phEReducedIdentified \succ 0.
    \end{aligned}
\end{equation}

\begin{proposition}
	The optimization problem~\cref{eq:convex_linear_reduced_ph_optimization_problem} is a convex optimization problem.
\end{proposition}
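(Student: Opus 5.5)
To prove the proposition, I would check the two ingredients of a convex program separately: the feasible set must be convex, and the objective must be a convex function of the decision variables $(\phStructureMatrixReducedIdentified,\phGeneralDissipationMatrixReducedIdentified,\phEReducedIdentified)$.

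\emph{Objective.} The data matrices $\reducedDstatesData$, $\reducedStatesData$, $\outputData$, $\inputData$ are fixed. The map
\begin{equation*}
(\phStructureMatrixReducedIdentified,\phGeneralDissipationMatrixReducedIdentified,\phEReducedIdentified)\mapsto \begin{bmatrix}\phEReducedIdentified\reducedDstatesData\\ -\outputData\end{bmatrix} - (\phStructureMatrixReducedIdentified-\phGeneralDissipationMatrixReducedIdentified)\begin{bmatrix}\reducedStatesData\\ \inputData\end{bmatrix}
\end{equation*}
is affine: each block is linear in one variable plus a constant. This is the key point, because the nonconvex product $(\phStructureMatrixReducedIdentified-\phGeneralDissipationMatrixReducedIdentified)\phQReducedIdentified$ from \cref{eq:general_linear_reduced_ph_optimization_problem} has been removed by the transformation \cref{eq:linear_berlin_ph_system_with_feedthrough}. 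The Frobenius norm is a norm, hence convex by the triangle inequality and homogeneity. A norm composed with an affine map is convex, so the objective is convex; the same holds for its square.

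\emph{Constraints.} I would treat each constraint in turn.
\begin{itemize}
\item Skew-symmetry, $\phStructureMatrixReducedIdentified=-\phStructureMatrixReducedIdentified\tran$, is a linear equality, so it defines a linear subspace.
\item The condition $\phGeneralDissipationMatrixReducedIdentified=\phGeneralDissipationMatrixReducedIdentified\tran\succeq 0$ defines the positive semidefinite cone, which is convex. To see this, note that $z\tran(\lambda X+(1-\lambda)Y)z=\lambda z\tran Xz+(1-\lambda)z\tran Yz\ge 0$ for $\lambda\in[0,1]$, and symmetry is preserved under convex combinations.
\item The condition $\phEReducedIdentified=\phEReducedIdentified\tran\succ0$ defines the open positive definite cone. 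The same computation with strict inequality shows that it is convex.
\end{itemize}
The block structure of $\phStructureMatrixReducedIdentified$ and $\phGeneralDissipationMatrixReducedIdentified$ is just a linear parametrization by $(\phJReducedIdentified,\phInputMatrixReducedIdentified,\dots)$, so it does not affect convexity. The feasible set is a Cartesian product of these convex sets and is therefore convex.

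Combining the two parts, the problem minimizes a convex function over a convex set, which is the claim.

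The only delicate step is the strict inequality $\phEReducedIdentified\succ0$. It makes the feasible set convex but not closed, so the infimum need not be attained. The proposition claims only convexity, though, not attainment. I would therefore remark that in practice one replaces $\succ0$ by $\phEReducedIdentified\succeq\varepsilon I$ for some small $\varepsilon>0$. This is again a convex, closed LMI constraint, and the problem becomes a standard semidefinite program.
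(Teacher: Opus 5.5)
Your proposal is correct and follows essentially the same route as the paper: the residual is affine in $(\phEReducedIdentified,\phStructureMatrixReducedIdentified,\phGeneralDissipationMatrixReducedIdentified)$, so the Frobenius-norm objective is convex, and the constraints are a linear subspace, the positive semidefinite cone, and the open positive definite cone. Your closing remark on non-attainment and the $\succeq \epsilon I$ relaxation matches the comment the paper makes right after its proof.
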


\begin{proof}
	Let us define the residual function $\residualOptimization$ of the optimization problem as
\begin{align}
	\residualOptimization(\phEReducedIdentified,\phStructureMatrixReducedIdentified,\phGeneralDissipationMatrixReducedIdentified) := \begin{bmatrix}
            \phEReducedIdentified\reducedDstatesData \\ -\outputData
        \end{bmatrix} - 
        \parentheses{\phStructureMatrixReducedIdentified - \phGeneralDissipationMatrixReducedIdentified}
        \begin{bmatrix} 
            \reducedStatesData \\ \inputData 
        \end{bmatrix}
\end{align}
and observe that $\residualOptimization$ is affine in its arguments.
Hence, the residual $\residualOptimization$ is affine in all decision variables.
Moreover, \cref{eq:convex_linear_reduced_ph_optimization_problem} is convex, since
the Frobenius norm is convex and the composition of a convex function with an affine function is again convex.
Finally, the constraints on the matrices are convex sets: the set of skew-symmetric matrices is a linear subspace, the set of positive semidefinite matrices is a convex cone, and the set of positive definite matrices is an open convex cone.
Consequently, the entire optimization problem \cref{eq:convex_linear_reduced_ph_optimization_problem} is a convex optimization problem.
\end{proof}

The optimization problem~\cref{eq:convex_linear_reduced_ph_optimization_problem} is a semidefinite program for which various efficient solvers exist; see \cite{GillEtAl19}.
In practical implementations, the strict positive definiteness constraint $\phEReducedIdentified \succ 0$ is typically enforced numerically by requiring $\phEReducedIdentified \succeq \regularizationTermPosDef \eye{\dimStates}$ with a small $\regularizationTermPosDef>0$, which ensures compatibility with interior-point solvers while preserving convexity.
Note that since the Hessian of the Hamiltonian is not known a priori, we use the standard projection strategy and a \gls{pod}-based reduction basis in this identification setting.

% =============================================================================
\section{Numerical Experiments}\label{sec:numerics}
% =============================================================================

We illustrate the theoretical findings and the proposed methodologies in several numerical experiments on well-established benchmark problems.
% Methodologies in use
In our investigation, we will compare the following methodologies:
\begin{itemize}
	\item \gpod: the intrusive Galerkin \gls{rom} obtained by projecting the \gls{fom} with a \gls{pod} basis $\reductionBasis$ as in \cref{eq:reduced_order_state_space_system}, where $\reductionBasisLeft = \reductionBasis$;
	\item \gopinf~\cite{PeherstorferWillcox16}: solving \cref{eq:pgopinf} with Galerkin-projected data as in~\cref{eq:projected_data_matrices_galerkin};
	\item \pgpod: the intrusive Petrov--Galerkin reduced system obtained by projecting the \gls{fom} system with a \gls{pod} basis $\reductionBasis$ and the test basis $\reductionBasisLeft = \phQ\reductionBasis$ as in \cref{eq:projected_data_matrices_petrov_galerkin};
	\item \pgopinf: solving \cref{eq:pgopinf} with obliquely projected data as in \cref{eq:projected_data_matrices_petrov_galerkin} where $\reductionBasisLeft = \phQ\reductionBasis$ with known Hessian of the Hamiltonian $\phQ$ from the corresponding \gls{ph} \gls{fom}~\cref{eq:linear_ph_system_with_feedthrough};
	\item \pgopinfham: solving \cref{eq:pgopinf} with obliquely projected data as in \cref{eq:projected_data_matrices_petrov_galerkin} where $\reductionBasisLeft = \phQIdentifiedHamiltonian\reductionBasis$ with the Hessian of the Hamiltonian $\phQIdentifiedHamiltonian$ is identified from Hamiltonian data via solving \cref{eq:hamiltonian_based_phQ_identification:opt} with semidefinite programming;	
	\item \cvxph: solving \cref{eq:convex_linear_reduced_ph_optimization_problem} via semidefinite programming without knowledge of the Hessian of the Hamiltonian $\phQ$ and using standard Galerkin-projected data from \cref{eq:projected_data_matrices_galerkin}.
\end{itemize}

Throughout the experiments, we employ a convergent time discretization and obtain the derivative data $\DstatesData$ subsequently via evaluating the \gls{fom} dynamics. As a consequence, we obtain $\residualTermStateTimeInt=\residualTermOutputTimeInt=0$ in \cref{eq:data_satisfies_system_equations}.

We consider four benchmark examples: \begin{enumerate*}
	\item the CD player system from \cite{DraSB92} \label{en:benchmarks:cd-player}
	\item the earth atmosphere model from \cite{FarI95} \label{en:benchmarks:earth-atmosphere}
	\item the mass-spring-damper system from \cite{GugercinEtAl12} \label{en:benchmarks:msd}
	\item and the poroelasticity system from \cite{AMU21}. \label{en:benchmarks:poro}
\end{enumerate*}
For each example, we proceed similarly: we simulate the system via the implicit midpoint rule and use a frequency sweep as input signal.
As starting frequency $\freqmin$, we choose a value that is sufficiently small to excite the low-frequency dynamics, and as end frequency $\freqmax$, we choose a value that is sufficiently large to excite the high-frequency dynamics. An educated guess for these values can be obtained by looking at the singular values of the transfer function for a range of frequencies (singular value plot).
Then, in order to prevent anti-alising effects, we choose the time step $\Delta \timee=\frac{1}{2\freqmax}$ and set the final time at least to $\timeeEnd \ge 10 \sqrt{\freqmin \freqmax}$, which corresponds to ten times the geometric mean of the starting and end frequencies. 
From the simulated data, we choose every $k$th snapshot to form the data matrices $\statesData$, $\DstatesData$, $\inputData$, and $\outputData$.
We proceed with the methods and report values for $\dimReducedStates=4,12,\dots, 80$. The numerical parameters, including the state dimension $\dimStates$ and input/output dimension $\dimInputs=\dimOutputs$ and the rank of the data matrix $\rank(\statesData)$ for the examples are summarized in \cref{tab:1}.

\begin{table}[ht]
  	\centering
  	\caption{Numerical parameters for the benchmark examples.}
  	\label{tab:1}
	\resizebox{0.98\linewidth}{!}{
	\begin{tabular}{l l c c S S S c c c}
		\toprule
		\multicolumn{2}{l}{Example} & {$\dimStates$} & {$\dimInputs$} & {$\freqmin$} & {$\freqmax$} & {$\Delta t$} & {$\timeeEnd$} & {$k$} & {$\rank(\statesData)$}\\
		\midrule
		\ref{en:benchmarks:cd-player} & CD-Player & 120 & 2 & 1 & 1e5 & 5e-6 & \num{0.06324555320336758} & 10 & 117 \\
		\ref{en:benchmarks:earth-atmosphere} & Earth atmosphere & 598 & 1 & 1e-2 & 1e1 & 5e-2 & \num{31.62277660168379} & 1 & 122 \\
		\ref{en:benchmarks:msd} & Mass-spring-damper & 100 & 2 & 1e-2 & 1 & 5e-1 & 200 & 3,4 & 85 \\
		\ref{en:benchmarks:poro} & Poroelasticity & 980 & 2 & 1e-1 & 1e3 & 5e-4 & 2 & 2 & 271\\
		\bottomrule
	\end{tabular}
	}
\end{table}

% Goals of the experiment with some error introduction
The experiments serve multiple purposes: first, we verify the theoretical results from \cref{sec:pgopinf} regarding the convergence of the identified operators to the intrusive operators in \cref{sec:numerics:1}.
Then, we compare \gopinf with the \pgopinf in terms of accuracy and stability of the \glspl{rom} in \cref{sec:numerics:2}.
Finally, we investigate in \cref{sec:numerics:3} the influence of the choice of the Hessian of the Hamiltonian $\phQ$ in the test basis $\reductionBasisLeft$ and compare it with the one-shot approach from \cref{sec:method:ph_convex_optimization}.

Throughout this section, the $\Hinf$-norm computations are carried out using pyMOR~\cite{MRS16}, whereas the convex optimization problem \cref{eq:convex_linear_reduced_ph_optimization_problem} is solved using CVXPY\footnote{\url{https://www.cvxpy.org/}} with MOSEK\footnote{\url{https://docs.mosek.com/11.0/pythonapi/index.html}}.

\begin{data_code_information_box}    
    The code used to generate the subsequent results is accessible via
		\begin{center}
			\href{https://github.com/Institute-Eng-and-Comp-Mechanics-UStgt/pgopinf}{https://github.com/Institute-Eng-and-Comp-Mechanics-UStgt/pgopinf}.
		\end{center}
  	%with a persistent release in~\todo{Upload package to Zenodo once it will be published} under MIT Common License.
\end{data_code_information_box}

\subsection{Distance to Intrusive Operators and Error Bounds}\label{sec:numerics:1}
We start with showcasing the theoretical findings from \cref{thm:pgopinf} and \cref{thm:OpInfNoConv} by comparing identified operators via \gopinf with the intrusive Galerkin projected operators (\gpod). 
To this end, we report the following quantities related to the results from \cref{thm:pgopinf} and \cref{thm:OpInfNoConv}:
\begin{equation*}
	\begin{aligned}
		\errOps &\vcentcolon= \frobNorm{\begin{bmatrix}
			\stateSpaceSystemMatrixReducedIdentifiedMinimizer & \stateSpaceInputMatrixReducedIdentifiedMinimizer\\
			\stateSpaceOutputMatrixReducedIdentifiedMinimizer & \stateSpaceFeedthroughMatrixReducedIdentifiedMinimizer
			\end{bmatrix}
			- \begin{bmatrix}
				\stateSpaceSystemMatrixReduced & \stateSpaceInputMatrixReduced\\
				\stateSpaceOutputMatrixReduced & \stateSpaceFeedthroughMatrixReduced
			\end{bmatrix}},
			&
		\normCorr &\vcentcolon= \frobNorm{
		\begin{bmatrix}
		\reductionBasisLeft\tran\stateSpaceSystemMatrix\orthogonalComplementStatesData\\
				\stateSpaceOutputMatrix\orthogonalComplementStatesData
		\end{bmatrix}
		\rhsStatesInputDataReduced\pinv
		},\\
		\errEst & \vcentcolon= 
			\frac{\sigma_{\dimReducedStates+1}}{\sigma_{\dimReducedStates}} \big\|\inputDataProjb\pinv\big\|_{\text{F}} \sqrt{\|\inputDataProja\|_{\text{F}}^2 + \dimInputs\sigma_{\dimReducedStates}^2}, \qquad
		&
		\errBound &\vcentcolon= \errEst \frobNorm{
			\begin{bmatrix}
				\reductionBasisLeft\tran\stateSpaceSystemMatrix\\
				\stateSpaceOutputMatrix
			\end{bmatrix}}.
	\end{aligned}
\end{equation*}
In more detail, $\varepsilon_{\text{ref}}$ denotes the distance between the identified operators and the intrusively projected operators. $\varepsilon_{\text{corr}}$ is the norm of the correction term in \cref{thm:pgopinf}. The quantity $\varepsilon_{\text{est}}$ is the bound given in \cref{thm:OpInfNoConv}; however, it only serves as an estimator since it bounds only $\frobNorm{\orthogonalComplementStatesData\rhsStatesInputDataReduced\pinv}$. 
Finally, $\varepsilon_{\text{bound}}$ denotes the full bound obtained by again using the submultiplicativity of the Frobenius norm.

The results are depicted in \cref{fig:1}. Additional to the quantities above, we also report the singular values $\sigma$ of the state data matrix $\statesData$.

\begin{figure}[htb]
	\centering
	\ref{leg:cdplayer}\\
	\vspace{1em}
	\begin{subfigure}{0.475\linewidth}
		\centering
		\input{plots/cdplayer_opinf_metrics.tikz}
		\vspace{-2.5ex} 
		\caption{CD Player}
		\label{fig:1:a}
	\end{subfigure}
	\hfill
	\begin{subfigure}{0.475\linewidth}
		\centering
		\input{plots/earth_atmosphere_opinf_metrics.tikz}
		\vspace{-2.5ex} 
		\caption{Earth atmosphere}
		\label{fig:1:b}
	\end{subfigure}\\[1.5em]
	\begin{subfigure}{0.475\linewidth}
		\centering
		\input{plots/msd_opinf_metrics.tikz}
		\vspace{-2.5ex} 
		\caption{Mass-spring-damper}
		\label{fig:1:c}
	\end{subfigure}
	\hfill
	\begin{subfigure}{0.475\linewidth}
		\centering
		\input{plots/poro_opinf_metrics.tikz}
		\vspace{-2.5ex} 
		\caption{Poroelasticity}
		\label{fig:1:d}
	\end{subfigure}
	\caption{Relative values of the quantities related to the \cref{thm:pgopinf} and \cref{thm:OpInfNoConv} and singular values $\sigma$ of the state data matrix $\statesData$ for several benchmark examples, where the \glspl{rom} are obtained by \gpod and \gopinf.}
	\label{fig:1}
\end{figure}

As expected, $\errOps = \normCorr$ for all reduced dimensions $\dimReducedStates$ in all examples.
Although the singular values $\sigma$ decay, there is no convergence rate of $\errOps$ observed.
This behavior is consistent with the theoretical findings in \cref{thm:OpInfNoConv}, since $\errEst$ (resp. $\errBound$) remain nearly constant, as expected from the discussion after \cref{thm:OpInfNoConv}.

We also confirmed the theoretical result in~\cref{en:zero-inp}: for all examples above, using a constant zero input signal yielded a relative error between the state and output matrices on the order of machine precision. To obtain non-trivial state trajectories in this setting, the time simulations were initialized with a non-zero initial condition.

\subsection{Stability-encouraging Property of PG-OpInf}\label{sec:numerics:2}
In this section, we turn our attention to accuracy and stability of the \glspl{rom} obtained by the \pgopinf. Therefore, we report the $\Hinf$-error, which is defined as 
\begin{equation*}
	\HinfNorm{\transferFunction} \vcentcolon= \sup_{\laplaceVariable \in \Cplus} \twoNorm{\transferFunction(\laplaceVariable)},
\end{equation*}
where $\transferFunction:\Cplus\to\C^{\dimOutputs \times \dimInputs}$ is transfer function of the error system. 
We revisit the \gls{ph} examples, namely, the mass-spring-damper and poroelasticity systems, and assume throughout this section that the Hessian of the Hamiltonian, $\phQ$, is known as it is used for data reduction as in \eqref{eq:projected_data_matrices_petrov_galerkin_ph}, and for the intrusive projection as in~\eqref{eq:pg:intrusive:rom:ph}.
The remaining setup is identical to that in \cref{sec:numerics:1}.

In \cref{fig:2}, we report the $\Hinf$ error for varying reduced orders of the resulting \glspl{rom}, obtained using \gpod, \gopinf, \pgpod, and \pgopinf. 

\begin{figure}[htb]
	\centering
	\ref{leg:msd:hinf}\\
	\vspace{1em}
	\begin{subfigure}{0.475\linewidth}
		\centering
		\input{plots/msd_hinf_exp2.tikz}
		\vspace{-2.5ex} 
		\caption{Mass-spring-damper}
		\label{fig:2:a}
	\end{subfigure}\hfill
	\begin{subfigure}{0.475\linewidth}
		\centering
		\input{plots/poro_hinf_exp2.tikz}
		\vspace{-2.5ex} 
		\caption{Poroelasticity}
		\label{fig:2:b}
	\end{subfigure}
	\caption{$\Hinf$ error for the mass-spring-damper and poroelasticity examples for reduced orders $\dimReducedStates = 2, 8, \ldots, 80$, using \glspl{rom} obtained via \gpod, \gopinf, \pgpod, and \pgopinf. Dashed lines indicate cases where the original \gls{rom} is unstable; in these instances, the $\Hinf$ error of the stable component is reported instead.}
	\label{fig:2}
\end{figure}

We observe that, for certain reduced orders in both examples, the \glspl{rom} obtained via \gopinf, \gpod, or \pgopinf are unstable, resulting in an infinite $\Hinf$ error. 
For these cases, we perform a stable-unstable decomposition~\cite{BCS16} and report the $\Hinf$ error of the stable component, which is shown as dashed lines in \cref{fig:2}.
In more detail, for the mass-spring-damper example, we observe that for reduced orders $\dimReducedStates=20,62,68,74,80$, the \glspl{rom} obtained by \gopinf and \gpod are unstable. 
In contrast, the \glspl{rom} obtained by \pgopinf (and \pgpod) are stable for all reduced orders and show also a smaller $\Hinf$-error for most reduced orders compared to the Galerkin approaches.

For the poroelasticity example, most \glspl{rom} obtained via the Galerkin approaches are unstable, whereas for \pgopinf only the reduced orders $\dimReducedStates = 8, 14, 20, 26, 56$ yield unstable models. As in the mass-spring-damper example, the Petrov--Galerkin approaches generally exhibit smaller $\Hinf$ errors than the Galerkin approaches. This difference becomes more pronounced for higher reduced orders, where the Petrov--Galerkin methods achieve $\Hinf$ errors that are several orders of magnitude lower.

To further illustrate the instability of the \glspl{rom} obtained via \gopinf and \gpod, we report in \cref{tab:2} the spectral abscissa of the reduced system matrix $\stateSpaceSystemMatrixReduced$ for the poroelasticity example, which is defined as
$\alpha(\stateSpaceSystemMatrixReduced) = \max\{\real{\eigenvalue} : \eigenvalue \in \sigma(\stateSpaceSystemMatrixReduced)\}$,
where $\sigma(\cdot)$ denotes the spectrum of a matrix. A positive spectral abscissa indicates instability, whereas a negative value indicates stability.

\begin{table}[ht]
  \centering
  \caption{Spectral abscissa for the poroelasticity example; stable models are highlighted in green.}\label{tab:2}
  \begin{tabular}{l S S[negative-color=LimeGreen] S[negative-color=LimeGreen] S[negative-color=LimeGreen]}
    \toprule
    {$\dimReducedStates$} & {\gpod} & {\gopinf} & {\pgpod} & {\pgopinf} \\
    \midrule
    \begin{filecontents*}{data/poro_spectral_abscissa_cmp_over_r.csv}
r,pod_opinf_g | identified | False,pod_opinf_g | intrusive | False,pod_opinf_g | identified | True,pod_opinf_g | intrusive | True,pod_opinf_pg | identified | False,pod_opinf_pg | intrusive | False,pod_opinf_pg | identified | True,pod_opinf_pg | intrusive | True
2,-21.45391487140712,2.165758963754969,-21.453914871407115,,-103.67710052386226,-59.58564929589088,-103.67710052386228,-59.585649295890846
8,-8.689980788264684,49.532289337383965,-8.689980788264629,-5.381886801499623,296.3997267566191,-38.4346487708811,-58.159666059105874,-38.4346487708815
14,852.3078895247369,946.0464696752758,-5.935263123865283,,77.92827944049182,-68.79550771124313,-27.528803157801317,-68.79550771124225
20,81.5885717787564,133.32637020578886,-4.488346300623593,-0.4505560579547285,231.5802828717695,-71.28729029734988,-0.8145028311268506,-71.28729029734885
26,121.74502339137864,190.8200496476404,-5.569421413990368,-5.650600624492357,30.42622641204951,-64.62286029616949,-55.676082923703575,-64.6228602961691
32,177.5582406837817,192.9300895167299,-6.600563356170977,-8.626350912328213,-1.4442486591860269,-92.61947575154689,-1.4442486591855055,-92.61947575154926
38,383.0447464351926,421.8911865664117,-1.8421769437095428,-0.3236059480793226,-1.9838689501247049,-1.8718345640303011,-1.9838689501231817,-1.871834564030307
44,12.001135913849712,9.34370171268133,-1.8421769389538865,-0.3027972391109379,-1.8723898790012965,-1.8718341770616227,-1.8723898790013007,-1.8718341770616316
50,240.5785080443105,221.87727491305108,-1.6474868182752076,-0.298811428280154,-0.5974687152378572,-1.664713852881242,-0.5974687158039176,-1.6647138528812506
56,160.75078219671195,217.57161683431707,-1.6474777432712857,-0.2461902024373315,79.19265635475601,-1.6535977626707894,-1.6591957744746997,-1.6535977627731813
62,142.38526573809685,187.09479914575004,-1.6474774586273693,-0.8180357653838775,-1.654277857013155,-1.6535871937573308,-1.6542778570132175,-1.6535871937704008
68,74.15053586349623,153.62867312534195,-1.6282297274231403,-0.5794277502832776,-1.6292987764540583,-1.629194857019099,-1.6292987764539264,-1.62919485701913
74,165.2822967941031,170.68911758823643,-1.6263635721746574,-1.6263640513526418,-1.6275966184583643,-1.6264986363973877,-1.627596618459454,-1.626498636397179
80,1000.768159873822,962.7650775344852,-1.626363552263305,-1.6263640263578505,-1.6265038008981858,-1.6264925158267425,-1.626503800898791,-1.6264925158249697
86,1342.1903954868903,1354.5216403179509,-1.6262233990366743,-1.6262234632766543,-1.6263017222771745,-1.6262626608801838,-1.6263017223067675,-1.626262660879279
92,3902.136721914348,4062.4625130389854,-1.6262040016624388,-0.4775682642253266,-1.6262364839161627,-1.6262375802163112,-1.6262364839130503,-1.6262375802161242
98,3945.7916867447375,4093.612723663989,-0.7881719092236473,-0.367782192984935,-1.6262547265127092,-1.6262416987701058,-1.6262547265004887,-1.6262416988556954
104,3929.634171597416,4072.932320723684,-0.3503131029356369,-0.3568015210904004,101.01211611488884,-1.2412693922269715,-1.626241886502629,-1.2412693963302202
110,3373.5145835128437,3377.7888438899167,-0.0582943007154305,-0.0714553220500303,79.03365478301487,-0.3122243976041713,-0.2021921711032948,-0.3122244051463833
116,3342.920065196839,3344.9112083787586,-0.0685896667382264,-0.0768553260598575,149.22749308120038,-0.1687520381996435,-0.2391206351601555,-0.1687520381947885
122,3259.25485982104,3259.0233050912493,-0.0885727993373861,-0.0892725007508734,-0.1683278999963401,-0.1678408595258558,-0.1683278999667453,-0.1678408592890627
128,2416.425855873548,2415.536969012467,-0.0817990958243705,-0.0825321885885765,-0.1679187775780422,-0.1677904075263778,-0.1679187778124733,-0.1677904075196847
134,2323.267083692604,2322.0482773751137,-0.0823477179517579,-0.08324181362829,0.0740561258433336,-0.1677683350175756,-0.1677711468058065,-0.1677683349259065
140,2054.6063881636924,2048.344729449024,-0.0804467215943441,-0.0716113981143891,0.0392109307422378,-0.1677508572795808,-0.1677599136248162,-0.1677508575187283
146,1963.3440640231468,1961.2499382953133,-0.0818207407323374,-0.0817250464672736,7.689663796528886,-0.167719722516358,-0.1677701003271144,-0.167719720787397
152,1902.20162541567,1902.196202755046,-0.0818907251158527,-0.0817773804734838,-0.1651770302549519,-0.1677205034131816,-0.1651770293656893,-0.1677205034946068
158,1906.8560346016611,1906.831012834804,-0.0814628839749256,-0.081497356702943,-0.1587478836275005,-0.1677108506256676,-0.1587478843954354,-0.1677108508075294
164,2083.34469969478,2083.814521524489,-0.081283972822935,-0.0812533227229083,-0.1382891914325148,-0.1677024651530259,-0.1382892149460618,-0.1677024648828221
170,2299.302664323792,2300.315609028763,-0.0816772893069282,-0.0815556698912368,-0.1658456453637125,-0.1677040156025023,-0.1658456462331368,-0.1677040154935651
176,2168.6817631284466,2166.4009656024573,-0.0811027841820251,-0.0811759394634859,-0.1670656550446633,-0.167711999391845,-0.1670656544107261,-0.1677119995198375
182,1721.056538988102,1734.9724781431712,-0.0811676714951186,-0.0811742938376107,-0.1669235231122482,-0.1677104835596672,-0.1669235232853169,-0.1677104834588126
188,1591.5741504913717,1610.4193138898167,-0.0811983444518234,-0.0812005882780149,-0.1675273012990955,-0.1677033310303405,-0.167527301506396,-0.1677033311161106
194,1599.7313619873682,1602.7049336677503,-0.0206956845137622,-0.0811991450475484,-0.1676875456569564,-0.1524085721173023,-0.1676875456785449,-0.1524300260210706
200,1588.0288465537317,1587.8215296641338,-0.0812002198635516,-0.0812001505648501,-0.1676820702872589,-0.1471663182382345,-0.167682070346772,-0.1471625873023268
\end{filecontents*}
\csvreader[
      late after line  = \\,   % newline after every data row
      late after last line = \\\bottomrule,
	  filter test = {\ifnumcomp{\r}{<}{81}}
    ]{data/poro_spectral_abscissa_cmp_over_r.csv}%
    {r=\r,pod_opinf_g | intrusive | False=\podg,pod_opinf_g | identified | False=\opinf,pod_opinf_pg | intrusive | False=\podpg,pod_opinf_pg | identified | False=\opinfpg}%
	{\r & \podg & \opinf & \podpg & \opinfpg}
  \end{tabular}
\end{table}

We observe that the \glspl{rom} from \gopinf (and \gpod) have unstable poles, which are not of small magnitude and are thus no numerical artifacts.
For the reported reduced orders, the \glspl{rom} obtained by \pgopinf (and \pgpod) are almost always more stable than their Galerkin counterparts in the sense that the spectral abscissa is smaller than the spectral abscissa of the Galerkin counterparts.
As demonstrated in \cref{fig:2} and \cref{tab:2}, \pgopinf does not guarantee stability of the resulting \glspl{rom}; however, it substantially increases the likelihood of obtaining stable models compared to the Galerkin approaches.
In more detail, for the poroelasticity example, 68\% of the \glspl{rom} with reduced orders $\dimReducedStates = 2, 8, \ldots, 200$ obtained via \pgopinf are stable, whereas only 6\% of those obtained via \gopinf are stable. 
Similarly, for the mass-spring-damper example, 97.5\% of the \glspl{rom} computed with \pgopinf are stable, compared to 60\% for \gopinf over reduced orders $\dimReducedStates = 2, 4, \ldots, 80$.

\subsection{Inferring the Hamiltonian and the pH One-shot Identification}\label{sec:numerics:3}
In our final example, we revisit the mass-spring-damper system, now considering the case where the Hessian of the Hamiltonian, $\phQ$, is unknown. We first apply the strategy from \cref{sec:method:hamiltonian_based_phQ_identification}, assuming that measurements of the Hamiltonian are available to estimate $\phQIdentifiedHamiltonian$ a priori. Alternatively, we employ the one-shot \cvxph operator inference approach from \cref{sec:method:ph_convex_optimization}.

For the exact setting of \cref{sec:numerics:2}, the estimation of~$\phQIdentifiedHamiltonian$ fails to converge, yielding a~$\phQ$ with entries of large magnitude ($\sim 10^6$), and consequently the \pgopinfham approach performs poorly. However, by increasing the number of data points, i.e., reducing the sampling rate to $k \delta \timee = 1.5$, corresponding to every third time-discretized data point, we obtain a reasonable estimate of $\phQIdentifiedHamiltonian$.

With this setup, we apply only the non-intrusive methods (\gopinf, \pgopinf, \pgopinfham, and \cvxph).
We emphasize that, like \gopinf, both \pgopinfham and \cvxph do not have access to the Hamiltonian Hessian $\phQ$. While \pgopinfham first estimates $\phQIdentifiedHamiltonian$ from Hamiltonian data and then uses it for the projection (as in \cref{sec:method:hamiltonian_based_phQ_identification}), \cvxph directly identifies the reduced operators by solving the convex optimization problem~\eqref{eq:convex_linear_reduced_ph_optimization_problem}, without any knowledge of $\phQ$.

We report the $\Hinf$ errors again in \cref{fig:msd_hinf_exp3} and observe that all methods produce stable \glspl{rom} for all reduced orders. As in \cref{sec:numerics:2}, \pgopinf (with known $\phQ$) achieves for most reduced orders the smallest $\Hinf$ errors, while \pgopinfham and \cvxph yield errors comparable to \gopinf.

We conclude, that \pgopinf demonstrates strong potential. However, in practical applications when the Hamiltonian Hessian is unknown, our current approach to determine~$\reductionBasisLeft$ is not yet fully satisfactory. 
Further research is needed to improve the estimation of~$\phQ$ or to identify alternative choices for $\reductionBasisLeft$, which remains an active area of investigation; cf.~\cite{EPB26, OttPR22}.

\begin{figure}[htb]
	\centering
	\ref{leg:msd:exp3:hinf}\\
	\vspace{1em}
	\input{plots/msd_hinf_exp3.tikz}
	\caption{$\Hinf$-error for the mass-spring-damper example for reduced orders $\dimReducedStates=2,8,\ldots,80$ for \glspl{rom} obtained by \gopinf, \pgopinf, \pgopinfham, and \cvxph, where the latter two methods do not have access to the Hessian of the Hamiltonian $\phQ$.
	}
	\label{fig:msd_hinf_exp3}
\end{figure}

% =============================================================================
\section{Conclusions}\label{sec:conclusions}
% =============================================================================
This work makes several contributions to structured data-driven modeling for \gls{lti} systems.
First, we developed and analyzed a Petrov--Galerkin variant of \gls{opinf} (PG-OpInf) and established its connection to intrusive Petrov--Galerkin model reduction.
In particular, we derived an explicit characterization of the relation between the identified and intrusively projected operators and showed that, except in special settings such as constant zero input signals, the identified operators do not generally converge to their intrusive counterparts.
However, we presented numerical examples that still show that \pgopinf is stability encouraging when a specific choice of the test basis is made, which is motivated by structure-preserving model reduction of \gls{ph} systems.
To address the requirement of the Hessian of the Hamiltonian in the structure-preserving Petrov--Galerkin projection, we propose a data-driven framework for inferring this matrix from measurements of the Hamiltonian or alternatively, introduced a novel one shot operator inference approach that guarantees \gls{ph} structure based on a constrained convex optimization problem.
The effectiveness and theoretical validity of the proposed methods are validated through numerical experiments on several benchmark examples.

% -----------------------------------------------------------------------------------
\subsection*{Acknowledgments}
% -----------------------------------------------------------------------------------
JF, JN, and JR acknowledge funding from the DFG under Germany's Excellence Strategy - EXC 2075 - 390740016, and are thankful for support by the Stuttgart Center for Simulation Science (SimTech).
JR thanks the Department of Mechanical and Aerospace Engineering at UC San Diego for hosting him on a research stay, which was also funded by SimTech.
BU acknowledges funding by the Deutsche Forschungsgemeinschaft (DFG, German Research Foundation) -- Project-ID 258734477 -- SFB 1173.
BK was supported by the U.S. Office of Naval Research under award number N00014-22-1-2624 and the U.S. National Science Foundation's  Civil, Mechanical, and Manufacturing Innovation division under award number 2144023.

% -----------------------------------------------------------------------------------
\subsection*{Tool and Computational Resource Disclosure}
% -----------------------------------------------------------------------------------
Generative AI tools were used to support the preparation of this work in several ways. For the mathematical analysis, they were used to independently double-check selected derivations, results, and mathematical claims. In the software development, they were used for code completion and, in particular, for suggestions concerning the organization and structure of the accompanying software package. For the manuscript, generative AI tools were used for language editing and to improve the clarity and flow of individual formulations. All AI-assisted output was critically reviewed and verified by the authors, who take full responsibility for the scientific content of the work.

%-----------------------------------------------------------------------------%
\printbibliography

\end{document}

%% file: variables.tex
\DeclarePairedDelimiter\abs{\lvert}{\rvert}% |x|
\DeclarePairedDelimiter\norm{\lVert}{\rVert}% ||x||
\DeclarePairedDelimiter\set{\lbrace}{\rbrace} % {x \mid x} use \mid for set properties
\DeclarePairedDelimiter\innerprod{\langle}{\rangle} % <x,y> use , to separate components
\DeclarePairedDelimiter\parentheses{(}{)} % parentheses that change size depending on content
\DeclarePairedDelimiter\brackets{[}{]} % brackets that change size depending on content
\makeatletter
\let\oldabs\abs
\def\abs{\@ifstar{\oldabs}{\oldabs*}}
\let\oldnorm\norm
\def\norm{\@ifstar{\oldnorm}{\oldnorm*}}
\let\oldinnerprod\innerprod
\def\innerprod{\@ifstar{\oldinnerprod}{\oldinnerprod*}}
\let\oldset\set
\def\set{\@ifstar{\oldset}{\oldset*}}
\let\oldparentheses\parentheses
\def\parentheses{\@ifstar{\oldparentheses}{\oldparentheses*}}
\let\oldbrackets\brackets
\def\brackets{\@ifstar{\oldbrackets}{\oldbrackets*}}
\makeatother

\newcommand{\cvxph}{pHOpInf-CVX\xspace}
\newcommand{\gopinf}{G-OpInf\xspace}
\newcommand{\pgopinf}{PG-OpInf\xspace}
\newcommand{\pgopinfham}{PG-OpInf-$\hamiltonian$\xspace}

\newcommand{\gpod}{G-POD\xspace}
\newcommand{\pgpod}{PG-POD\xspace}

\DeclareMathOperator{\rank}{rank}
\DeclareMathOperator{\Real}{Re}   % real part of complex number
\newcommand{\real}[1]{\Real\parentheses{#1}}
\newcommand{\R}{\ensuremath\mathbb{R}}

\newcommand{\C}{\ensuremath\mathbb{C}}
\newcommand{\Cplus}{\C^{+}}

\newcommand{\N}{\ensuremath\mathbb{N}}

\newcommand{\Spd}[1]{\mathcal{S}^{#1}_{\succ}}
\newcommand{\Spsd}[1]{\mathcal{S}^{#1}_{\succeq}}

\newcommand{\T}{\intercal}
\newcommand{\tran}{^{\intercal}} % transpose
\newcommand{\inv}{^{-1}} % inverse
\newcommand{\pinv}{^{\dagger}} % pseudo-inverse
\newcommand{\zeros}{\bm{0}}
\newcommand{\eye}[1]{\bm{I}_{#1}}
\newcommand{\Spann}{\operatorname{span}}
\newcommand{\Span}[1]{\Spann\parentheses{#1}}
\newcommand{\kron}{\otimes}                          % kronecker product
\newcommand{\vech}[1]{\operatorname{vech}(#1)}       % half-vectorization
\newcommand{\indexFrobNorm}{\mathrm{F}}
\NewDocumentCommand{\frobNorm}{s m}{%
    \IfBooleanTF{#1}
        {\norm*{#2}_{\indexFrobNorm}}
        {\norm{#2}_{\indexFrobNorm}}%
}
\newcommand{\twoNorm}[1]{\norm{#1}_{2}}

\newcommand{\Hinf}{\mathcal{H}_\infty}
\newcommand{\HinfNorm}[1]{\norm{#1}_{\Hinf}}

\newcommand{\redIndex}{\mathrm{r}}

\newcommand{\reduce}[1]{#1_{\redIndex}}
\newcommand{\reducePH}[1]{#1_{\redIndex,\mathrm{pH}}}
\newcommand{\berlinAccent}[1]{\widehat{#1}}
\newcommand{\identifiedAccent}[1]{\tilde{#1}}
\newcommand{\unconstrainedAccent}[1]{\breve{#1}}

\newcommand{\timee}{t}
\newcommand{\timeeEnd}{T}

\newcommand{\timeInterval}{\mathbb{T}}
\newcommand{\ddt}{\ensuremath{\frac{\mathrm{d}}{\mathrm{d}t}}}

\newcommand{\stateSpaceSystemMatrix}{\bm{A}}
\newcommand{\stateSpaceInputMatrix}{\bm{B}}
\newcommand{\stateSpaceOutputMatrix}{\bm{C}}
\newcommand{\stateSpaceFeedthroughMatrix}{\bm{D}}
\newcommand{\stateSpaceDescriptorMatrix}{\bm{E}}
\newcommand{\stateSpaceInput}{u}
\newcommand{\stateSpaceInputs}{\bm{\stateSpaceInput}}
\newcommand{\stateSpaceOutput}{y}
\newcommand{\stateSpaceOutputs}{\bm{\stateSpaceOutput}}

\newcommand{\dimInputs}{m}
\newcommand{\dimOutputs}{p}
\newcommand{\dimStates}{n}
\newcommand{\transferFunction}{\mathcal{G}}
\newcommand{\laplaceVariable}{s}
\newcommand{\spectrum}{\sigma}

\newcommand{\eigenvalue}{\lambda}

\newcommand{\storageFunction}{\hamiltonian}
\newcommand{\stateSpaceSystemMatrixReduced}{\reduce{\stateSpaceSystemMatrix}}
\newcommand{\stateSpaceInputMatrixReduced}{\reduce{\stateSpaceInputMatrix}}
\newcommand{\stateSpaceOutputMatrixReduced}{\reduce{\stateSpaceOutputMatrix}}
\newcommand{\stateSpaceFeedthroughMatrixReduced}{\reduce{\stateSpaceFeedthroughMatrix}}
\newcommand{\stateSpaceDescriptorMatrixReduced}{\reduce{\stateSpaceDescriptorMatrix}}
\newcommand{\stateSpaceOutputsReduced}{\reduce{\stateSpaceOutputs}} % use same as pH
\newcommand{\blockMatrixStateSpaceABCD}{\bm{M}}

\newcommand{\blockMatrixStateSpaceABCDReducedIdentified}{\reduce{\identifiedAccent{\blockMatrixStateSpaceABCD}}}

\newcommand{\iteratorTimeDiscretization}{i}
\newcommand{\stateSpaceSystemMatrixReducedIdentified}{\reduce{\identifiedAccent{\stateSpaceSystemMatrix}}}
\newcommand{\stateSpaceInputMatrixReducedIdentified}{\reduce{\identifiedAccent{\stateSpaceInputMatrix}}}
\newcommand{\stateSpaceOutputMatrixReducedIdentified}{\reduce{\identifiedAccent{\stateSpaceOutputMatrix}}}
\newcommand{\stateSpaceFeedthroughMatrixReducedIdentified}{\reduce{\identifiedAccent{\stateSpaceFeedthroughMatrix}}}

\newcommand{\state}{x}
\newcommand{\states}{\bm{\state}}
\newcommand{\initialConditionStates}{\states_{0}}
\newcommand{\Dstates}{\dot{\states}}
\newcommand{\hamiltonian}{\mathcal{H}}
\newcommand{\phJ}{\bm{J}}
\newcommand{\phR}{\bm{R}}
\newcommand{\phQ}{\bm{Q}}
\newcommand{\phE}{\bm{E}}
\newcommand{\phInputMatrix}{\bm{G}}
\newcommand{\phFeedthroughInputMatrix}{\bm{P}}
\newcommand{\phFeedthroughOutputMatrixSym}{\bm{S}}
\newcommand{\phFeedthroughOutputMatrixSkew}{\bm{N}}
\newcommand{\phGeneralDissipationMatrix}{\bm{\Xi}}
\newcommand{\phStructureMatrix}{\bm{\Gamma}}
\newcommand{\phInput}{\stateSpaceInput} % same as for state-space
\newcommand{\phInputs}{\bm{\phInput}}
\newcommand{\phOutput}{\stateSpaceOutput}
\newcommand{\phOutputs}{\bm{\phOutput}}
\newcommand{\phJBerlin}{\berlinAccent{\phJ}}
\newcommand{\phRBerlin}{\berlinAccent{\phR}}

\newcommand{\phInputMatrixBerlin}{\berlinAccent{\phInputMatrix}}
\newcommand{\phFeedthroughInputMatrixBerlin}{\berlinAccent{\phFeedthroughInputMatrix}}
\newcommand{\phFeedthroughOutputMatrixSymBerlin}{\berlinAccent{\phFeedthroughOutputMatrixSym}}
\newcommand{\phFeedthroughOutputMatrixSkewBerlin}{\berlinAccent{\phFeedthroughOutputMatrixSkew}}

\newcommand{\statesData}{\bm{X}}
\newcommand{\DstatesData}{\dot{\bm{X}}}
\newcommand{\inputData}{\bm{U}}
\newcommand{\outputData}{\bm{Y}}
\newcommand{\rhsStatesInputData}{\bm{T}}

\newcommand{\lhsDstatesOutputData}{\bm{Z}}

\newcommand{\numDataPoints}{N}
\newcommand{\phQIdentifiedHamiltonian}{\identifiedAccent{\phQ}_{\hamiltonian}}

\newcommand{\phGeneralDissipationMatrixReducedIdentified}{\reduce{\identifiedAccent{\phGeneralDissipationMatrix}}}

\newcommand{\phStructureMatrixReducedIdentified}{\reduce{\identifiedAccent{\phStructureMatrix}}}
\newcommand{\phEReducedIdentified}{\reduce{\identifiedAccent{\phE}}}
\newcommand{\phQReducedIdentified}{\reduce{\identifiedAccent{\phQ}}}
\newcommand{\phJReducedIdentified}{\reduce{\identifiedAccent{\phJ}}}
\newcommand{\phRReducedIdentified}{\reduce{\identifiedAccent{\phR}}}

\newcommand{\phInputMatrixReducedIdentified}{\reduce{\identifiedAccent{\phInputMatrix}}}
\newcommand{\phFeedthroughInputMatrixReducedIdentified}{\reduce{\identifiedAccent{\phFeedthroughInputMatrix}}}
\newcommand{\phFeedthroughOutputMatrixSymReducedIdentified}{\reduce{\identifiedAccent{\phFeedthroughOutputMatrixSym}}}
\newcommand{\phFeedthroughOutputMatrixSkewReducedIdentified}{\reduce{\identifiedAccent{\phFeedthroughOutputMatrixSkew}}}

\newcommand{\freqmin}{f_1}
\newcommand{\freqmax}{f_2}

\newcommand{\pHStateSpaceSystemMatrixReducedIdentified}{\reducePH{\identifiedAccent{\stateSpaceSystemMatrix}}}
\newcommand{\pHStateSpaceInputMatrixReducedIdentified}{\reducePH{\identifiedAccent{\stateSpaceInputMatrix}}}
\newcommand{\pHStateSpaceOutputMatrixReducedIdentified}{\reducePH{\identifiedAccent{\stateSpaceOutputMatrix}}}
\newcommand{\pHStateSpaceFeedthroughMatrixReducedIdentified}{\reducePH{\identifiedAccent{\stateSpaceFeedthroughMatrix}}}

\newcommand{\unconstrainedPHJ}{\unconstrainedAccent{\phJ}}
\newcommand{\unconstrainedPHQ}{\unconstrainedAccent{\phQ}}
\newcommand{\unconstrainedPHR}{\unconstrainedAccent{\phR}}

\newcommand{\reductionBasis}{\bm{V}}
\newcommand{\reducedSpace}{\mathcal{V}}
\newcommand{\reductionBasisLeft}{\bm{W}}
\newcommand{\reducedSpaceTest}{\mathcal{W}}
\newcommand{\reducedStates}{\states_{\redIndex}}
\newcommand{\reducedDStates}{\dot{\states}_{\redIndex}}
\newcommand{\DreducedStates}{\reducedDStates}
\newcommand{\reducedphOutputs}{\phOutputs_{\redIndex}}

\newcommand{\dimReducedStates}{r}
\newcommand{\phJReduced}{\reduce{\phJ}}
\newcommand{\phRReduced}{\reduce{\phR}}
\newcommand{\phQReduced}{\reduce{\phQ}}
\newcommand{\phEReduced}{\reduce{\phE}}

\newcommand{\phInputMatrixReduced}{\reduce{\phInputMatrix}}
\newcommand{\phFeedthroughInputMatrixReduced}{\reduce{\phFeedthroughInputMatrix}}
\newcommand{\phFeedthroughOutputMatrixSymReduced}{\reduce{\phFeedthroughOutputMatrixSym}}
\newcommand{\phFeedthroughOutputMatrixSkewReduced}{\reduce{\phFeedthroughOutputMatrixSkew}}

\newcommand{\rhsStatesInputDataReduced}{\reduce{\rhsStatesInputData}}
\newcommand{\lhsDstatesOutputDataReduced}{\reduce{\lhsDstatesOutputData}}

\newcommand{\projector}{\Pi}

\newcommand{\reducedStatesData}{\reduce{\statesData}}
\newcommand{\reducedDstatesData}{\reduce{\DstatesData}}
\newcommand{\projectorPH}{\Pi_{\mathrm{pH}}}

\newcommand{\duplicateMat}{\bm{D}_{n}}
\newcommand{\statesDataKron}{\statesData_{\kron}}

\newcommand{\residualOptimization}{\mathcal{R}}
\newcommand{\regularizationTermPosDef}{\epsilon}

\newcommand{\orthogonalComplementStatesData}{\statesData_{\perp}}
\newcommand{\residualTerm}{\mathcal{E}}
\newcommand{\residualTermState}{\residualTerm_{\states}}
\newcommand{\residualTermOutput}{\residualTerm_{\phOutputs}}
\newcommand{\residualTermStateTimeInt}{\Delta_{\states}}
\newcommand{\residualTermOutputTimeInt}{\Delta_{\phOutputs}}
\newcommand{\minimizerAccent}[1]{#1^{*}}
\newcommand{\stateSpaceSystemMatrixReducedIdentifiedMinimizer}{\minimizerAccent{\stateSpaceSystemMatrixReducedIdentified}}
\newcommand{\stateSpaceInputMatrixReducedIdentifiedMinimizer}{\minimizerAccent{\stateSpaceInputMatrixReducedIdentified}}  
\newcommand{\stateSpaceOutputMatrixReducedIdentifiedMinimizer}{\minimizerAccent{\stateSpaceOutputMatrixReducedIdentified}}
\newcommand{\stateSpaceFeedthroughMatrixReducedIdentifiedMinimizer}{\minimizerAccent{\stateSpaceFeedthroughMatrixReducedIdentified}}
\newcommand{\blockMatrixStateSpaceABCDReducedIdentifiedMinimizer}{\minimizerAccent{\reduce{\identifiedAccent{\blockMatrixStateSpaceABCD}}}}

\newcommand{\svdUa}{\bm{\Theta}_1}
\newcommand{\svdUb}{\bm{\Theta}_2}
\newcommand{\svdSa}{\bm{\Sigma}_1}
\newcommand{\svdSb}{\bm{\Sigma}_2}
\newcommand{\svdVa}{\bm{\Psi}_1}
\newcommand{\svdVb}{\bm{\Psi}_2}
\newcommand{\inputDataProja}{\inputData_1}
\newcommand{\inputDataProjb}{\inputData_2}

\newcommand{\errOps}{\varepsilon_{\text{ref}}}
\newcommand{\normCorr}{\varepsilon_{\text{corr}}}
\newcommand{\errEst}{\varepsilon_{\text{est}}}
\newcommand{\errBound}{\varepsilon_{\text{bound}}}

\newcommand{\arbitraryMatrixA}{\bm{A}}
\newcommand{\arbitraryMatrixB}{\bm{B}}

%% file: glossaries.tex
\usepackage[acronym]{glossaries}
\glsdisablehyper % disable hyperlinks
\newacronym{ph}{pH}{port-Hamiltonian}
\newacronym{dmd}{DMD}{dynamic mode decomposition}
\newacronym{opinf}{OpInf}{operator inference}
\newacronym{mor}{MOR}{model order reduction}
\newacronym{ode}{ODE}{ordinary differential equation}
\newacronym{dae}{DAE}{differential-algebraic equation}
\newacronym{pde}{PDE}{partial differential equation}
\newacronym{pinn}{PINN}{physics-informed neural network}
\newacronym{sindy}{SINDy}{sparse identification of nonlinear dynamics}
\newacronym{ae}{AE}{autoencoder}
\newacronym{kyp}{KYP}{Kalman-Yakubovich-Popov}
\newacronym{pr}{PR}{positive-real}
\newacronym{phdmd}{pHDMD}{port-Hamiltonian dynamic mode decomposition}
\newacronym{fgm}{FGM}{fast-gradient method}
\newacronym{pod}{POD}{proper orthogonal decomposition}
\newacronym{lti}{LTI}{linear time-invariant}
\newacronym{fom}{FOM}{full-order model}
\newacronym{rom}{ROM}{reduced-order model}
\newacronym{lhs}{LHS}{left-hand side}
\newacronym{rhs}{RHS}{right-hand side}
\newacronym{fe}{FE}{finite element}
\newacronym{fem}{FEM}{finite element method}
\newacronym{msd}{MSD}{mass-spring-damper}
\newacronym{mimo}{MIMO}{multiple-input multiple-output}
\newacronym{imr}{IMR}{implicit midpoint rule}

%% file: figures/ph_figure_config.tex
\usepackage{tikz, tikzscale}
\usepackage{circuitikz}
\usepackage{pgfplots}
\usepgfplotslibrary{colorbrewer}
\pgfplotsset{compat=newest}
\usetikzlibrary{matrix, calc, positioning, fit, patterns, decorations.pathmorphing, decorations.markings, positioning, angles, shapes.geometric, math, arrows.meta, fadings, through, quotes,pgfplots.statistics}
\usetikzlibrary{intersections, backgrounds}

\usepackage{pgfplotstable}

\pgfplotsset{
  every axis plot/.append style={line width=1pt},
  every axis/.append style={
    ymajorgrids,
    xmajorgrids,
    grid style={dashed, lightgray,semithick},
   	axis line style = semithick,
   	every tick/.style=semithick,
    enlarge x limits=false,
    enlarge y limits=true,
  },
  every axis legend/.append style={
        fill opacity=0.7,
        draw opacity=1,
        text opacity=1,
        legend cell align = left,
        text=black,
        /tikz/every even column/.append style={column sep=10pt},
    },
  myline/.style={
    thick,
  },
  mybar/.style n args={1}{
    fill=#1!30!white,
    draw=#1,
  },
  mylegend/.style={
        fill opacity=0.7,
        draw opacity=1,
        text opacity=1,
        draw=white!80!black,
    },
  general_mark_options/.style={
        mark size = 1.8, 
        mark repeat = 50,
        mark options={solid},
  },
  identified_line/.style={        
        dashed,
        general_mark_options,
        mark = diamond*,
        mark phase = 25,
        },
  intrusive_line/.style={        
        dashdotted,
        general_mark_options,
        mark = square*,
        mark phase = 45,
  },
  error_line/.style={
        densely dotted,
  },
  input1_line/.style={
        myline,
        color=plotColor1,
  },
  input2_line/.style={
        myline,
        color=plotColor2,
        dashed,
  },
  diff_id_intrusive_line/.style={
        identified_line,
        mark repeat =1,
        mark phase = 0,
  },
  diff_id_intrusive_with_bias_line/.style={
        intrusive_line,
        mark repeat =1,
        mark phase = 0,
  },
  cond_rhs_data_reduced_line/.style={
        error_line,
        general_mark_options,
        mark = diamond*,
        mark repeat =1,
        mark phase = 0,
  },
  reference_line/.style={  
        myline, 
        reference_color,     
        solid,
        general_mark_options,
        mark = *, 
        mark phase=10,    
  },
galerkin_intrusive_bar/.style={
        pattern=north east lines,
        pattern color=opinf_galerkin_color,
        draw=opinf_galerkin_color,
  },
  petrov_galerkin_intrusive_bar/.style={
         pattern=north east lines,
        pattern color=opinf_petrov_galerkin_color,
        draw=opinf_petrov_galerkin_color,
  },
  opinf_galerkin_line/.style={
        myline,
        color=opinf_galerkin_color,
  },
  opinf_galerkin_bar/.style={
        mybar={opinf_galerkin_color},
  },
opinf_petrov_galerkin_line/.style={
        myline,
        color=opinf_petrov_galerkin_color,
},
opinf_petrov_galerkin_bar/.style={
        mybar={opinf_petrov_galerkin_color},
},
opinf_petrov_galerkin_hamiltonian_line/.style={
        myline,
        color=opinf_petrov_galerkin_hamiltonian_color,
},
opinf_petrov_galerkin_hamiltonian_bar/.style={
        mybar={opinf_petrov_galerkin_hamiltonian_color},
},
opinf_petrov_galerkin_kyp_line/.style={
        myline,
        color=opinf_petrov_galerkin_kyp_color,
},
opinf_petrov_galerkin_kyp_bar/.style={
        mybar={opinf_petrov_galerkin_kyp_color},
},
cvxph_galerkin_line/.style={
        myline,
        color=cvxph_galerkin_color,
},
cvxph_galerkin_bar/.style={
        mybar={cvxph_galerkin_color},
},
cvxph_petrov_galerkin_line/.style={
        myline,
        color=cvxph_petrov_galerkin_color,
},
cvxph_petrov_galerkin_bar/.style={
        mybar={cvxph_petrov_galerkin_color},
},
galerkin_line/.style={
        myline,
        general_mark_options,
        mark = *, 
        mark phase=0,
        mark repeat =2,
},
petrov_galerkin_line/.style={
        myline, 
        general_mark_options,
        mark = diamond*, 
        mark phase=2,
        mark repeat =2,
        },
rel_error_ABCD_line/.style={
        myline,
        color=rel_error_ABCD_color,
},
rel_error_ABCD_with_residual_line/.style={
        myline,
        color=rel_error_ABCD_with_residual_color,
},
condition_number_line/.style={
        myline,
        color=condition_number_color,
},
rel_error_A_B_line/.style={
        myline,
        color=rel_error_A_B_color,
},
rel_error_C_D_line/.style={
        myline,
        color=rel_error_C_D_color,
},
residual_term_states_line/.style={
        myline,
        color=residual_term_states_color,
        dashed,
},
residual_term_outputs_line/.style={
        myline,
        color=residual_term_outputs_color,
        dashed,
},
residual_term_states_without_pinv_line/.style={
        myline,
        color=residual_term_states_without_pinv_color,
},
residual_term_outputs_without_pinv_line/.style={
        myline,
        color=residual_term_outputs_without_pinv_color,
},}

\tikzset{mark options/.append style = {mark repeat = 60}, % repeat mark on every n points
}

\pgfplotsset{
        gen_mark_opt/.style={
                mark options={solid},
                mark size = 1.8,
                mark repeat = 1,  
        },
        err_ops_line/.style={
                myline,
                color=Set1-A,
                gen_mark_opt,
                mark=o,
        },
        norm_corr_mark/.style={
                gen_mark_opt,
                mark=x,
                only marks,
        },
        err_bound_line/.style={
                myline,
                color=Set1-C,
                gen_mark_opt,
                mark=triangle*,
        },
        err_est_line/.style={
                myline,
                color=Set1-D,
                mark options={rotate=180}, % use mark options before further options, e.g. mark repeat, mark phase are defined
                gen_mark_opt,
                mark=triangle*,
        },
        sing_val_line/.style={
                myline,
                color=Set1-E,
                gen_mark_opt,
                mark=diamond*,
        },
        gpod_line/.style={
                myline,
                color=Set1-A,
                gen_mark_opt,
                mark=*,
        },
        pgopinfham_line/.style={
                gpod_line,
        },
        gpod_stab_line/.style={
                gpod_line,
                dashed,
                mark=o,
        },
        pgopinfham_stab_line/.style={
                gpod_stab_line,
        },
        pgpod_line/.style={
                myline,
                color=Set1-C,
                gen_mark_opt,
                mark=square*,
        },
        cvxph_line/.style={
                pgpod_line,
        },
        gopinf_line/.style={
                myline,
                color=Set1-D,
                gen_mark_opt,
                mark=triangle*,
        },
        gopinf_stab_line/.style={
                gopinf_line,
                dashed,
                mark=triangle,
        },
        pgopinf_line/.style={
                myline,
                color=Set1-E,
                gen_mark_opt,
                mark=diamond*,
        },
        pgopinf_stab_line/.style={
                pgopinf_line,
                dashed,
                mark=diamond,
        },
}

\definecolor{plotColor1}{HTML}{59a89c} % #59a89c
\definecolor{plotColor2}{HTML}{e02b35} % #e02b35
\definecolor{plotColor3}{HTML}{082a54}  % #082a54
\definecolor{plotColor4}{HTML}{a559aa} % #a559aa
\definecolor{plotColor5}{HTML}{ff7f0e}  % #ff7f0e
\definecolor{plotColor6}{HTML}{2ca02c}  % #2ca02c
\definecolor{plotColor7}{HTML}{1f77b4}  % #1f77b4
\definecolor{plotColor8}{HTML}{f0c571}  % #f0c571
\definecolor{plotColor9}{HTML}{cecece}  % #cecece

\definecolor{gray1}{gray}{0.15}     % #262626
\definecolor{gray2}{gray}{0.3}      % #4D4D4D
\definecolor{gray3}{gray}{0.45}     % #737373
\definecolor{gray4}{gray}{0.6}      % #999999
\definecolor{gray5}{gray}{0.75}     % #BFBFBF
\definecolor{gray6}{gray}{0.9}      % #E6E6E6

\colorlet{reference_color}{black}
\colorlet{opinf_galerkin_color}{plotColor1}
\colorlet{opinf_petrov_galerkin_color}{plotColor2}
\colorlet{opinf_petrov_galerkin_hamiltonian_color}{plotColor3}
\colorlet{opinf_petrov_galerkin_kyp_color}{plotColor4}
\colorlet{cvxph_galerkin_color}{plotColor5}
\colorlet{cvxph_petrov_galerkin_color}{plotColor6}
\colorlet{zoom_box_color}{gray2}
\colorlet{rel_error_ABCD_color}{plotColor1}
\colorlet{rel_error_ABCD_with_residual_color}{plotColor2}
\colorlet{condition_number_color}{plotColor3}
\colorlet{rel_error_A_B_color}{plotColor4}
\colorlet{rel_error_C_D_color}{plotColor5}
\colorlet{residual_term_states_color}{plotColor6}
\colorlet{residual_term_outputs_color}{plotColor7}
\colorlet{residual_term_states_without_pinv_color}{plotColor8}
\colorlet{residual_term_outputs_without_pinv_color}{plotColor9}

%% file: plots/cdplayer_opinf_metrics.tikz
\newcommand{\cdplayerfilename}{cd_player_opinf_metrics_over_r.csv}

\begin{tikzpicture}
\begin{axis}[
    width=.94\linewidth,
    height=0.6\linewidth,
    ymode=log,
    xlabel={Reduced order $\dimReducedStates$},
    ylabel={Rel. error},
    ymin=1e-6,
    ymax=1e3,
    ytick={1e-6,1e-4,1e-2,1e0,1e2},
    legend to name = leg:cdplayer,
    legend columns=5,
    % legend style={
    % text=black, column sep=2pt,
    % /tikz/every even column/.append style={column sep=10pt}
    % }
]

\addplot[err_ops_line] table [x=r, y=eps_ref, col sep=comma] {\cdplayerfilename};
\addlegendentry{$\errOps$}

\addplot[norm_corr_mark] table [x=r, y=eps_corr, col sep=comma] {\cdplayerfilename};
\addlegendentry{$\normCorr$}

\addplot[err_bound_line] table [x=r, y=eps_bound, col sep=comma] {\cdplayerfilename};
\addlegendentry{$\errBound$}

\addplot[err_est_line] table [x=r, y=eps_est, col sep=comma] {\cdplayerfilename};
\addlegendentry{$\errEst$}

\addplot[sing_val_line] table [x=r, y=sigma, col sep=comma] {\cdplayerfilename};
\addlegendentry{$\sigma$}

\end{axis}
\end{tikzpicture}

%% file: plots/earth_atmosphere_opinf_metrics.tikz
\newcommand{\earthatmospherefilename}{earth_atmosphere_opinf_metrics_over_r.csv}

\begin{tikzpicture}
\begin{axis}[
    width=.94\linewidth,
    height=0.6\linewidth,
    ymode=log,
    xlabel={Reduced order $\dimReducedStates$},
    xmin=2,
    xmax=80,
    ylabel={Rel. error},
    ymin=1e-6,
    ymax=1e3,
    ytick={1e-6,1e-4,1e-2,1e0,1e2},
    legend to name = leg:earthatmosphere,
    legend columns=5,
    % legend style={
    % text=black, column sep=2pt,
    % /tikz/every even column/.append style={column sep=10pt}
    % }
]

\addplot[err_ops_line] table [x=r, y=eps_ref, col sep=comma] {\earthatmospherefilename};
\addlegendentry{$\errOps$}

\addplot[norm_corr_mark] table [x=r, y=eps_corr, col sep=comma] {\earthatmospherefilename};
\addlegendentry{$\normCorr$}

\addplot[err_bound_line] table [x=r, y=eps_bound, col sep=comma] {\earthatmospherefilename};
\addlegendentry{$\errBound$}

\addplot[err_est_line] table [x=r, y=eps_est, col sep=comma] {\earthatmospherefilename};
\addlegendentry{$\errEst$}

\addplot[sing_val_line] table [x=r, y=sigma, col sep=comma] {\earthatmospherefilename};
\addlegendentry{$\sigma$}

\end{axis}
\end{tikzpicture}

%% file: plots/msd_opinf_metrics.tikz
\newcommand{\msdfilename}{msd_mimo_50mass_opinf_metrics_over_r.csv}

\begin{tikzpicture}
\begin{axis}[
    width=.94\linewidth,
    height=0.6\linewidth,
    ymode=log,
    xlabel={Reduced order $\dimReducedStates$},
    ylabel={Rel. error},
    ymin=1e-6,
    ymax=1e3,
    ytick={1e-6,1e-4,1e-2,1e0,1e2},
    legend to name = leg:msd,
    legend columns=5,
    % legend style={
    % text=black, column sep=2pt,
    % /tikz/every even column/.append style={column sep=10pt}
    % }
]

\addplot[err_ops_line] table [x=r, y=eps_ref, col sep=comma] {\msdfilename};
\addlegendentry{$\errOps$}

\addplot[norm_corr_mark] table [x=r, y=eps_corr, col sep=comma] {\msdfilename};
\addlegendentry{$\normCorr$}

\addplot[err_bound_line] table [x=r, y=eps_bound, col sep=comma] {\msdfilename};
\addlegendentry{$\errBound$}

\addplot[err_est_line] table [x=r, y=eps_est, col sep=comma] {\msdfilename};
\addlegendentry{$\errEst$}

\addplot[sing_val_line] table [x=r, y=sigma, col sep=comma] {\msdfilename};
\addlegendentry{$\sigma$}

\end{axis}
\end{tikzpicture}

%% file: plots/poro_opinf_metrics.tikz
\newcommand{\porofilename}{poro_opinf_metrics_over_r.csv}

\begin{tikzpicture}
\begin{axis}[
    width=.94\linewidth,
    height=0.6\linewidth,
    ymode=log,
    xlabel={Reduced order $\dimReducedStates$},
    ylabel={Rel. error},
    ymin=1e-6,
    ymax=1e3,
    ytick={1e-6,1e-4,1e-2,1e0,1e2},
    legend to name = leg:poro,
    legend columns=5,
    % legend style={
    % text=black, column sep=2pt,
    % /tikz/every even column/.append style={column sep=10pt}
    % }
]

\addplot[err_ops_line] table [x=r, y=eps_ref, col sep=comma] {\porofilename};
\addlegendentry{$\errOps$}

\addplot[norm_corr_mark] table [x=r, y=eps_corr, col sep=comma] {\porofilename};
\addlegendentry{$\normCorr$}

\addplot[err_bound_line] table [x=r, y=eps_bound, col sep=comma] {\porofilename};
\addlegendentry{$\errBound$}

\addplot[err_est_line] table [x=r, y=eps_est, col sep=comma] {\porofilename};
\addlegendentry{$\errEst$}

\addplot[sing_val_line] table [x=r, y=sigma, col sep=comma] {\porofilename};
\addlegendentry{$\sigma$}

\end{axis}
\end{tikzpicture}

%% file: plots/msd_hinf_exp2.tikz
\newcommand{\msdHinfData}{msd_mimo_50mass_hinf_error_petrov_vs_galerkin.csv}

\begin{tikzpicture}
\begin{axis}[
    width=0.93\linewidth,
    height=0.6\linewidth,
    ymode=log,
    xlabel={Reduced order $\dimReducedStates$},
    unbounded coords=jump,
    ylabel={Rel. $\Hinf$-error},
    % ymin=1e-2,
    % ymax=1e2,
    legend to name = leg:msd:hinf,
    legend columns=5,
    legend style={
    text=black, column sep=2pt,
    /tikz/every even column/.append style={column sep=10pt}
    }
]

% gpod-stab
\addplot[gpod_stab_line, forget plot] table [x=r, y=pod_opinf_g | intrusive | True, col sep=comma] {\msdHinfData};

% gpod
\addplot[gpod_line] table [x=r, y=pod_opinf_g | intrusive | False, col sep=comma] {\msdHinfData};
\addlegendentry{\gpod}

% gopinf-stab
\addplot[gopinf_stab_line, forget plot] table [x=r, y=pod_opinf_g | identified | True, col sep=comma] {\msdHinfData};

% gopinf
\addplot[gopinf_line] table [x=r, y=pod_opinf_g | identified | False, col sep=comma] {\msdHinfData};
\addlegendentry{\gopinf}

% pgpod
\addplot[pgpod_line] table [x=r, y=pod_opinf_pg | intrusive | False, col sep=comma] {\msdHinfData};
\addlegendentry{\pgpod}

% pgopinf
\addplot[pgopinf_line] table [x=r, y=pod_opinf_pg | identified | False, col sep=comma] {\msdHinfData};
\addlegendentry{\pgopinf}

% pgopinf-stab
\addplot[pgopinf_stab_line, forget plot] table [x=r, y=pod_opinf_pg | identified | True, col sep=comma] {\msdHinfData};
\addlegendentry{\pgopinf}

\end{axis}
\end{tikzpicture}

%% file: plots/poro_hinf_exp2.tikz
\newcommand{\poroHinfData}{poro_hinf_error_petrov_vs_galerkin.csv}

\begin{tikzpicture}
\begin{axis}[
    width=0.93\linewidth,
    height=0.6\linewidth,
    ymode=log,
    xlabel={Reduced order $\dimReducedStates$},
    unbounded coords=jump,
    ylabel={Rel. $\Hinf$-error},
    % ymin=1e-2,
    % ymax=1e2,
    legend to name = leg:poro:hinf,
    legend columns=5,
    legend style={
    text=black, column sep=2pt,
    /tikz/every even column/.append style={column sep=10pt}
    }
]

% gpod-stab
\addplot[gpod_stab_line, forget plot] table [x=r, y=pod_opinf_g | intrusive | True, col sep=comma] {\poroHinfData};

% gpod
\addplot[gpod_line] table [x=r, y=pod_opinf_g | intrusive | False, col sep=comma] {\poroHinfData};
\addlegendentry{\gpod}

% gopinf-stab
\addplot[gopinf_stab_line, forget plot] table [x=r, y=pod_opinf_g | identified | True, col sep=comma] {\poroHinfData};

% gopinf
\addplot[gopinf_line] table [x=r, y=pod_opinf_g | identified | False, col sep=comma] {\poroHinfData};
\addlegendentry{\gopinf}

% pgpod
\addplot[pgpod_line] table [x=r, y=pod_opinf_pg | intrusive | False, col sep=comma] {\poroHinfData};
\addlegendentry{\pgpod}

% pgopinf
\addplot[pgopinf_line] table [x=r, y=pod_opinf_pg | identified | False, col sep=comma] {\poroHinfData};
\addlegendentry{\pgopinf}

% pgopinf-stab
\addplot[pgopinf_stab_line, forget plot] table [x=r, y=pod_opinf_pg | identified | True, col sep=comma] {\poroHinfData};
\addlegendentry{\pgopinf}

\end{axis}
\end{tikzpicture}

%% file: plots/msd_hinf_exp3.tikz
\newcommand{\msdhinffilename}{msd_mimo_50mass_hinf_error_g_pg_hamcvx_ph.csv}

\def\yMin{-0.1}
\def\yMax{0.1}
\def\xMin{0}
\def\xMax{50}

\def\widthFactor{0.45}

\newcommand{\addstabilizedfalse}[1]{#1 | False}
\newcommand{\addstabilizedtrue}[1]{#1 | True}

\begin{tikzpicture}
    % Simulation 0
    \begin{axis}
        [   width=0.7\textwidth,
            height=0.6211*0.7*\textwidth,
            xlabel=Reduced order $\dimReducedStates$,
            ylabel={Rel. $\Hinf$-error},
            ymode = log,
            unbounded coords=jump,
            % title=test trajectory 1,
            at = {(0,0)},
            legend pos = north east,
            % ymin = \yMin,
            % ymax = \yMax,
            % restrict y to domain=\yMin:\yMax,
            % xmin = \xMin,
            % xmax = \xMax,
            % restrict x to domain=\xMin:\xMax,
            % scale only axis,
            name=msd_output_exp3,
            legend columns=4,
            legend to name=leg:msd:exp3:hinf,
        ]
        % \addplot [reference_line, mark=none,
        %     ] table [col sep=comma, x=time, y=pod_opinf_g | identified] {\msdhinffilename};
        % \addlegendentry{reference}
        \addplot [gopinf_line] table [col sep=comma, x=r, y=\addstabilizedfalse{pod_opinf_g | identified}] {\msdhinffilename};  
        \addlegendentry{\gopinf} 
        \addplot [gopinf_stab_line, forget plot, dashed] table [col sep=comma, x=r, y=\addstabilizedtrue{pod_opinf_g | identified}] {\msdhinffilename};  
        % \addlegendentry{\gopinf}
        \addplot [pgopinf_line] table [col sep=comma, x=r, y=\addstabilizedfalse{pod_opinf_pg_Qknown | identified}] {\msdhinffilename};  
        \addlegendentry{\pgopinf}   
        \addplot [pgopinfham_line] table [col sep=comma, x=r, y=\addstabilizedfalse{pod_opinf_pg_Qhamcvx | identified}] {\msdhinffilename};  
        \addlegendentry{\pgopinfham}   
        \addplot [pgopinfham_stab_line, forget plot] table [col sep=comma, x=r, y=\addstabilizedtrue{pod_opinf_pg_Qhamcvx | identified}] {\msdhinffilename};  
        % \addlegendentry{\pgopinfham} 
        \addplot [cvxph_line] table [col sep=comma, x=r, y=\addstabilizedfalse{pod_convex_ph | identified}] {\msdhinffilename};  
        \addlegendentry{\cvxph} 
    \end{axis}
\end{tikzpicture}